\newtheorem{thm}{Theorem}[section]
\newtheorem{lem}[thm]{Lemma}
\newtheorem{prop}[thm]{Proposition}
\newtheorem*{t5}{Theorem 5.1}
\theoremstyle{remark}
\newtheorem{ntn}[thm]{Notation}
\newtheorem{rem}[thm]{Remark}
\newcommand{\pl}{$\Gamma = ({\mathcal P}, {\mathcal L})$}
\newcommand{\mss}[2]{M_{#1}{(#2)}}
\newcommand{\res}[2]{Res_{#1}{(#2)}}
\begin{document}

\title[On a space related to the building of type ${\widetilde E}_7$]{On a space related to the affine building of type $\mathbf{E_7}$}

\author{Silvia Onofrei}
\address{Department of Mathematics, Kansas State University, 137 Cardwell Hall, Manhattan, Kansas 66506, Email: onofrei@math.ksu.edu}
\subjclass{51B25, 51E24}

\begin{abstract}A locally truncated geometry with diagram of type affine $E_7$ is studied. One considers a parapolar space $\widetilde{\Gamma}$, locally of type $A_{7,4}$, which is subject to an extra axiom. A covering $\Gamma$ of this space is constructed. It is proved that $\Gamma$ is a rank 6, residually connected, locally truncated diagram geometry which is a homomorphic image of a truncated building of affine type $E_7$. Consequently, $\widetilde{\Gamma}$ is also a homomorphic image of a truncated building.
\end{abstract}

\maketitle

\section{Introduction}
The purpose of this work is to study a parapolar space which is locally of type $A_{7,4}$ and which is subject to an extra assumption called the Weak Hexagon Axiom. The present results complete a more general analysis of those parapolar spaces which are locally of type $A_{n-1, 4}({\mathbb K})$, with $n$ a suitably chosen integer and ${\mathbb K}$ a field, performed by the author and described below. To simplify the notation we will omit the field ${\mathbb K}$.

\medskip
In a previous paper \cite{on1}, we studied parapolar spaces ${\widetilde \Gamma}$ which were locally of type $A_{n-1,4}$ (with $n > 6$ and $ n \not =  8$) and which satisfied:

\medskip
{\it The Weak Hexagon Axiom (WHA): Let $H = (p_1, \ldots p_6)$ be a
$6$-circuit, isometrically embedded in ${\widetilde \Gamma}$, this means that $p_i \in p^{\perp}_{i+1}$, indices taken mod $6$, and all the other pairs are not collinear. Also assume that at least one of the pairs of points at distance two, say $\lbrace p_1, p_3 \rbrace$, is polar. Then there exists a point $w \in p_1^{\perp} \cap p_3^{\perp} \cap p_5^{\perp}$.}

\medskip
First it was proved that ${\widetilde \Gamma}$ can be enriched to a locally truncated geometry of rank $5$. Using the fact that ${\widetilde \Gamma}$ inherits from the local structure two classes of maximal singular subspaces, a new family of objects ${\widetilde {\mathcal D}}$ was constructed. The elements of ${\widetilde {\mathcal D}}$ are $2$-convex subspaces of type $D_{6,6}$; see \cite[Theorem 2]{on1}. Second, using a sheaf theoretic argument it was shown that ${\widetilde \Gamma}$ was a homomorphic image of a truncated building; see \cite[Theorem 3.b]{on1}.

\medskip
The case with $n=7$ which was not considered in \cite{on1} is the subject of this work. The difficulty of this case relies on the fact that all the maximal singular subspaces have the same singular rank, hence a partition of the maximal singular subspaces according to dimension is no longer attainable and therefore many of the arguments based on the local properties cannot be used. In order to overcome this difficulty, we shall construct a locally truncated diagram geometry {\pl} which is a covering of ${\widetilde \Gamma}$ and which is locally isomorphic to ${\widetilde \Gamma}$, thus of local type $A_{7,4}$, satisfies {\it (WHA)} and in addition, has the property that the maximal singular subspaces can be separated in two families which we shall denote $\mathcal{A}$ and $\mathcal{B}$.

\medskip
From this point our approach is similar to the one used in \cite{on1}. We start by constructing two new collections of $2$-convex subspaces ${\mathcal D}_{\mathcal A}$ and  ${\mathcal D}_{\mathcal B}$ whose elements are of type $D_{6,6}$. We prove that every symplecton $S \in \mathcal{S}$ of $\Gamma$ is contained in exactly one element of ${\mathcal D}_{\mathcal A}$ and one element of ${\mathcal D}_{\mathcal B}$. Therefore the space $\Gamma$ can be enriched to a rank six geometry $(\mathcal{P, L, A, B, D_A, D_B})$. Then we construct a sheaf over this locally truncated diagram geometry and we prove that $\Gamma$ is a homomorphic image of a truncated building of affine type $E_7$.

\medskip
In Section $2$ we provide the reader with a list of definitions and basic results which will be used in the latter sections. In Section $3$, we use a theorem by Kasikova and Shult to construct the covering $\Gamma$. The main result of Section $4$ is contained in Theorem $4.1$ which attests the existence of two families of subspaces ${\mathcal D}_{\mathcal A}$ and ${\mathcal D}_{\mathcal B}$ and that every symplecton of $\Gamma$ lies in exactly two subspaces, one from each family. In Section $5$, we prove the main result of the paper:

\begin{t5}Let ${\widetilde \Gamma} = ({\widetilde {\mathcal P}}, {\widetilde {\mathcal L}})$ be a parapolar space which is locally of type $A_{7, 4}$. Let $I = \lbrace 1, \ldots 8 \rbrace, \; J = \lbrace 1, 8 \rbrace$ and $K = I \setminus J$. Assume that ${\widetilde \Gamma}$ satisfies the Weak Hexagon Axiom. Then there is a residually connected $J$-locally truncated diagram geometry $\Gamma$ belonging to the diagram:\\
\begin{picture}(1000, 50)(0,0)
\put(0,25){$\mathcal{Y}$}
\put(60,25){$\qed$}
\put(75,15){\scriptsize 1}
\put(80,29){\line(1,0){32}}
\put(112,26){$\circ$}
\put(113,15){\scriptsize 2}
\put(113,36){\scriptsize ${\mathcal D}_{\mathcal B}$}
\put(118,29){\line(1,0){30}}
\put(148,26){$\circ$}
\put(150,36){\scriptsize ${\mathcal B}$}
\put(150,15){\scriptsize 3}
\put(153,29){\line(1,0){30}}
\put(183,26){$\circ$}
\put(190,36){\scriptsize ${\mathcal L}$}
\put(190,15){\scriptsize 5}
\put(186,-3){\line(0,1){30}}
\put(183,-9){$\circ$}
\put(171,-10){\scriptsize ${\mathcal P}$}
\put(190,-10){\scriptsize 4}
\put(188,29){\line(1,0){30}}
\put(218,26){$\circ$}
\put(220,36){\scriptsize ${\mathcal A}$}
\put(220,15){\scriptsize 6}
\put(223,29){\line(1,0){30}}
\put(253,26){$\circ$}
\put(253,36){\scriptsize ${\mathcal D}_{\mathcal A}$}
\put(255,15){\scriptsize 7}
\put(258,29){\line(1,0){30}}
\put(276,25){$\qed$}
\put(290,15){\scriptsize 8}
\end{picture}

\vspace{.4cm}
whose universal covering is the truncation of a building. Therefore, ${\widetilde \Gamma}$ is also the homomorphic image of a truncated building.
\end{t5}

\section{Preliminaries and definitions}
\subsection{Geometries} In this section the basic definitions related to geometries are given; for an expository treatment see \cite[Chapter 3]{hnbk}.

\medskip
A {\it geometry over} $I$ is a system $\Gamma = (V, *, t)$ consisting of set $V$, a binary, symmetric, reflexive relation on $V$ and a mapping $t: V \rightarrow I$. The elements of $V$ are called {\it objects}, $*$ is called the {\it incidence relation} and $t$ is the {\it type function} of $\Gamma$.

\medskip
A {\it flag} $F$ of $\Gamma$ is a (possibly empty) subset of pairwise incident objects of $\Gamma$. The set $t(F)$ is the {\it type} of $F$ and the set $I \setminus t(F)$ is its {\it cotype}. The cardinalities of these sets are the {\it rank} and the {\it corank} of $F$. The {\it residue} of $F$ in $\Gamma$ is the geometry $\res{\Gamma}{F} = (V_F, *_{|V_F}, t _{|V_F})$ over $I \setminus t(F)$, where $V_F$ is the set of all members of $V \setminus F$ incident with each element of $F$. The corank of a flag $F$ is the rank of $\res {\Gamma}{F}$. A geometry $\Gamma$ is {\it residually connected} if and only if for every flag $F$ of corank at least one, $\res{\Gamma}{F}$ is not empty and if, for each flag of corank at least two, $\res{\Gamma}{F}$ is connected.

\medskip
Let $\Gamma _k = (V_k, *_k, t_k)$ with $k \in K$ some index set and where each $\Gamma_k$ is a geometry over $I_k$. Assume that $\lbrace I_k \rbrace _{k \in K}$ is a family of pairwise disjoint sets. The {\it direct sum of geometries} is the geometry denoted by $\Gamma = \bigoplus _{k \in K} \Gamma _k = (V, *, t)$, where $V = \bigcup _{k \in K} V_k$. The incidence is defined as follows: $*_{|V_k}\;  :=  *_k$ and $x * y$ for any two objects $x \in V_{k_1}$ and $y \in V_{k_2}$ with $k_1 \not = k_2$. Finally $t _{|V_k}\;  := t_k$.

\medskip
Let ${\mathcal Geom}_I$ denote the category whose objects are
geometries with typeset $I$ and whose morphisms are the type preserving graph morphisms. A {\it fibering morphism of geometries} $\phi: \Gamma_1 \rightarrow \Gamma_2$ is an object surjective morphism which is object bijective when restricted to the residue of each object in $\Gamma_1$.

\subsection{Locally truncated geometries}
We gather in this subsection the necessary notions on locally truncated geometries needed in this paper; for a more detailed account of the concepts from this subsection the reader is referred to Brouwer and Cohen \cite{loc} and Ronan \cite{ron}.

\medskip
Let $I$ be an index set and let $J \subset I$. The {\it truncation of type $J$} of $\Gamma$, denoted by $^J \Gamma$ is the geometry obtained by restricting the typeset of
$\Gamma$ to $J$. The truncation is a functor $^J {\bf Tr}: {\mathcal Geom}_I \rightarrow {\mathcal Geom}_J$ from the category of geometries over $I$
to the category of geometries over $J$.

\medskip
The {\it $J$-truncation of $\Gamma$}, denoted by $_J \Gamma$, has as objects the objects of $\Gamma$ whose types
are in $I \setminus J$, incidence and type function are those from
$\Gamma$ but restricted to $I \setminus J$. Differently said, the
$J$-truncation of $\Gamma$ is the truncation of type $I \setminus J$ of $\Gamma$, that is $_J \Gamma = \; ^{I \setminus J}\Gamma$.

\medskip
A {\it diagram} $D$ over $I$ is a mapping which assigns to each $2$-subset $\lbrace i, j \rbrace$ of $I$, a class $D(i,j)$ of rank $2$ geometries.  A geometry $\Delta$ over $I$ {\it belongs to the diagram} $D$ if and only if every
residue of type $\lbrace i, j \rbrace$ of $\Delta$ is a geometry from $D(i,j)$.

\medskip
A geometry $\Gamma$ over $I \setminus J$ is said to be $J$-{\it locally truncated of type $D$} (or {\it with diagram $D$}) over
$I$ if and only if for every nonempty flag $F$ of $\Gamma$, the residue $\res{\Gamma}{F}$ is isomorphic to the truncation of type $I \setminus ( J \cup t(F))$ of a geometry belonging to the diagram $D_{I \setminus t(F)}$, the restriction of $D$
to the typeset $I \setminus t(F)$. If $\Gamma$ is the truncation of type $I \setminus J$ of a geometry
$\Delta$ of type $D$ over $I$ then $\Gamma$ it is a geometry of $J$-locally
truncated type $D$. The converse is in general not true; see Brouwer and Cohen
\cite{loc} and Ronan \cite{ron}.

\medskip
Let $M=(m_{ij})$ be a Coxeter matrix with rows and columns indexed by $I$. The {\it
diagram} of $M$, denoted $D(M)$, assigns to each $2$-subset $\lbrace i, j \rbrace$ of $I$, the class $D(i,j)$ of generalized $m_{i j }$-gons. If $\Gamma$ is a residually connected geometry with diagram $D(M)$, that is every residue of type $\lbrace i, j \rbrace$ of $\Gamma$ is a generalized $m_{i j}$-gon, then $\Gamma$ is called a {\it geometry of type} $M$.

\subsection{Chamber systems}
The notion of chamber system was introduced by Tits \cite{tits}. We give below basic definitions and results which will be used in Section $5$; for a detailed account on chamber systems see \cite{cp}.

\medskip
A {\it chamber system} ${\mathcal C} = (C, E, \lambda, I)$ over $I$ is a simple graph $(C, E)$ together with an edge-labeling $\lambda: E \rightarrow 2^{I} \setminus \lbrace \emptyset \rbrace$ by nonempty subsets of $I$ such that, if $a, b, c \in C$ are three pairwise adjacent vertices, then $\lambda (a,b) \cap \lambda (b,
c) \subseteq \lambda (a,c)$. The elements of $C$ are called
{\it chambers}. Two distinct chambers $a$ and $b$ are $i$-{\it adjacent} iff $(a,b) \in E$ is an edge and $i \in \lambda (a,b)$, for any $i \in I$. The rank of ${\mathcal C}$ is the cardinality of the index set $I$. The chamber systems over $I$ together with the appropriate morphisms form a category denoted ${\mathcal Chamb}_I$.

\medskip
For $J$ a subset of $I$, the {\it residue of ${\mathcal C}$ of type $J$} or the $J$-{\it residue}, is a connected component of the graph $(C, E_J, \lambda _J, J)$, with
$\lambda_J$ the restriction of $\lambda$ to $\lambda^{-1}(2^J) \subseteq E$, where $2^J$ is the codomain of $\lambda _J$, and $E_J =\lbrace e \in E \mid \lambda _J (e) \not=
\emptyset \rbrace $. A $J$-residue $R$ is a chamber system over the typeset $J$. The set $I \setminus J$ is called the {\it cotype} of $R$. The {\it rank of the residue} $R$ is $| J|$ and its {\it corank} is $| I \setminus J |$.

\medskip
A chamber system over $I$ is {\it residually connected} if and only if for every subset $J \subseteq I$ and for every family $\lbrace R_j: j\in J \rbrace$ of residues of cotype $j$, with the property that any two have nonempty intersection, it follows that $\cap _{j \in J}R_j$ is a nonempty residue of type $I \setminus J$.

\medskip
Let $\Gamma$ be a geometry over $I$. Denote by ${\bf C}(\Gamma)$ the set of its chamber flags, that is, the flags of type $I$. Two chamber flags $c$ and $d$ are said to be $i$-adjacent whenever they have the same element of type $j$ for all $j \not = i$, with $i, j \in I$. Then ${\bf C}(\Gamma)$, with the above adjacency relation, is a chamber system of type $I$. Starting with a chamber system ${\mathcal C}$ over $I$, we define a geometry ${\bf G}({\mathcal C}) = (C_i, i\in I, *, t)$. The objects of this geometry are the elements $C_i, i \in I$, the collection of all corank one residues of type $I \setminus \lbrace i \rbrace$ of ${\mathcal C}$. Two objects are incident if they have nonempty intersection. The above construction gives rise to a pair of functors ${\mathbf G}: {\mathcal Chamb}_I \rightarrow
{\mathcal Geom}_I$ and
${\mathbf C}: {\mathcal Geom}_I \rightarrow {\mathcal Chamb}_I$ such that ${\mathbf G}( {\bf C}(\Gamma)) = \Gamma$, if $\Gamma$ is a residually connected geometry and $I$ is finite, and ${\mathbf C}({\mathbf G}({\mathcal C}))= {\mathcal C}$, if ${\mathcal C}$ is a residually connected chamber system. For more details see \cite{asp}.

\medskip
For a connected chamber system ${\mathcal C}$ over $I$, a {\it $2$-cover} of ${\mathcal C}$ is a
connected chamber system $\widetilde {\mathcal C}$ together with a chamber system
morphism $h : \widetilde {\mathcal C} \rightarrow {\mathcal C}$ which is surjective on chambers and is an isomorphism when restricted to any residues of rank at most $2$ of $\widetilde {\mathcal C}$. A $2$-cover $h: \widetilde {\mathcal C} \rightarrow {\mathcal C}$ is
said to be {\it universal} if for any other $2$-cover $\varphi : {\mathcal C}'
\rightarrow {\mathcal C}$ there is a $2$-cover $\psi : \widetilde {\mathcal C} \rightarrow
{\mathcal C}'$ such that $h \circ \psi = \varphi$. It can be proved that
chamber systems always have universal coverings.

\medskip
A {\it chamber system ${\mathcal C}$ over $I$ belongs to the
diagram $D(M)$}, with $M$ a Coxeter matrix, if and only if every residue of ${\mathcal C}$ of type $\lbrace i, j \rbrace \subseteq I$ is the chamber system of a generalized
$m_{ij}$-gon; one also says that ${\mathcal C}$ is a chamber system of type $M$. {\it Buildings} are chamber systems of type $M$ which satisfy extra axioms. For a complete definition of the buildings see \cite{tits}. The following result, known as Tits' Local Approach Theorem, can be found in \cite[Corollary 3]{tits}:

\begin{thm}[Tits]Suppose ${\mathcal C}$ is a chamber system of type $M$ with $M$ a Coxeter
matrix, and suppose that for every rank $3$ residue, the universal $2$-cover is a building. Then the universal $2$-cover of ${\mathcal C}$ is a building ${\mathcal B}$ of type $M$.
\end{thm}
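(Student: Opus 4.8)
My plan is to reduce the theorem to a local-to-global characterization of buildings and then to establish that characterization through the homotopy calculus of galleries. Throughout I will call a connected chamber system \emph{simply $2$-connected} if its universal $2$-cover is an isomorphism, equivalently if every $2$-cover of it is an isomorphism. \textbf{Step 1 (reduction).} I would start from the universal $2$-cover $h \colon \widetilde{\mathcal C} \to {\mathcal C}$, whose existence is noted above. Since $h$ restricts to an isomorphism on every residue of rank at most $2$, the rank $2$ residues of $\widetilde{\mathcal C}$ are again generalized $m_{ij}$-gons, so $\widetilde{\mathcal C}$ is of type $M$, and by construction $\widetilde{\mathcal C}$ is simply $2$-connected. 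By the standard fact that a cover restricts to a cover on residues, for each rank $3$ residue $\widetilde R$ of $\widetilde{\mathcal C}$ the map $h|_{\widetilde R} \colon \widetilde R \to R := h(\widetilde R)$ is a $2$-cover of a rank $3$ residue of ${\mathcal C}$; and since residues of a simply $2$-connected chamber system are again simply $2$-connected, $\widetilde R$ is simply $2$-connected. A simply $2$-connected $2$-cover of $R$ is, by uniqueness of universal covers, the universal $2$-cover of $R$, which is a building by hypothesis, so $\widetilde R$ is a building. Thus it suffices to prove: \emph{a simply $2$-connected chamber system of type $M$ whose rank $3$ residues are all buildings is itself a building of type $M$.}

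\textbf{Step 2 (homotopy machinery).} To exploit this I would assign to each gallery its \emph{type}, the word $s_{i_1}\cdots s_{i_k}$ in the Coxeter generators recording its adjacency labels, and call two galleries with the same extremities \emph{elementarily homotopic} if they differ either by a repeated chamber or by replacing a subgallery contained in one rank $2$ residue of type $\{i,j\}$ with another subgallery of that residue having the same ends. Simple $2$-connectedness is precisely the statement that every closed gallery is null-homotopic for the equivalence generated by these moves. The aim is to construct from the chamber-system data the \emph{Weyl distance} $\delta \colon C \times C \to W$, where $\delta(c,d)$ is the element of $W$ whose reduced word is the type of a minimal gallery from $c$ to $d$; once $\delta$ is shown to be well defined and to obey the building axioms, $\widetilde{\mathcal C}$ acquires a system of apartments isomorphic to the Coxeter complex of $M$ and is therefore a building.

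\textbf{Step 3 (the crux).} The heart of the argument is to prove that any two minimal galleries sharing both extremities are homotopic, and in particular have the same reduced type. Here the two hypotheses play complementary roles. Tits' solution of the word problem in $W$ says that two reduced words for one element differ by a sequence of braid substitutions $s_i s_j s_i\cdots \leftrightarrow s_j s_i s_j\cdots$ of length $m_{ij}$, and each such substitution is realizable at the gallery level inside a rank $2$ residue of type $\{i,j\}$ because that residue is a generalized $m_{ij}$-gon. The delicate point is that braid moves sharing an index interfere, and I expect that reconciling them requires carrying out the rearrangement inside a rank $3$ residue of type $\{i,j,k\}$; the hypothesis that these residues are buildings is exactly what makes such composite rearrangements available. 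Finally simple $2$-connectedness supplies the global input: transporting a minimal gallery around a closed loop returns it to its own homotopy class, so $\delta$ is genuinely single-valued rather than defined only up to a fundamental-group action.

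With $\delta$ in hand, the remaining verification of the building axioms — that $\delta(c,d)=1$ iff $c=d$, the exchange condition relating $\delta(c,d)$ and $\delta(c',d)$ for adjacent $c,c'$, and the surjectivity onto the relevant cosets — concerns galleries of bounded length and so reduces to residues of rank at most $3$, where everything holds because those residues are buildings, with simple $2$-connectedness again licensing the passage from local to global. \textbf{The main obstacle} I anticipate is exactly Step 3: converting the local, rank $\le 3$ building structure into a globally consistent Weyl distance. The difficulty is two-layered — algebraically one must realize all braid relations of $W$ by rank $2$ gallery moves and control their interaction via the rank $3$ building hypothesis, and topologically one must invoke simple $2$-connectedness to rule out monodromy around closed galleries. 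It is the possible failure of the latter for ${\mathcal C}$ itself that makes the passage to the universal $2$-cover indispensable in the statement.
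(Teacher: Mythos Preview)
The paper does not prove this theorem; it is quoted as Tits' Local Approach Theorem with a citation to \cite{tits}, so there is no in-paper argument to compare against. Your Steps 2--3 are in the spirit of the standard proof: pass to the universal $2$-cover, read off a $W$-valued distance from gallery types, and use simple $2$-connectedness together with the rank $3$ hypothesis to show it is well defined and satisfies the building axioms.

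The gap is in Step 1. You invoke as standard that ``residues of a simply $2$-connected chamber system are again simply $2$-connected,'' and from this deduce that each rank $3$ residue $\widetilde R$ of $\widetilde{\mathcal C}$ is itself the universal $2$-cover of its image $R$ and hence a building. That claim is not obvious: it is equivalent to the injectivity of the inclusion-induced map on $2$-fundamental groups, i.e., to the assertion that a closed gallery lying in a residue which becomes null-$2$-homotopic in the ambient chamber system was already null-$2$-homotopic inside the residue. A general chamber system has no retraction onto a residue to force this, and in the present setting the statement is essentially a \emph{consequence} of what you are trying to prove (once $\widetilde{\mathcal C}$ is a building its residues are buildings, hence simply $2$-connected), so appealing to it here is circular. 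The standard argument sidesteps the issue entirely: what does transfer for free from ${\mathcal C}$ to $\widetilde{\mathcal C}$ is the weaker property that every rank $3$ residue of $\widetilde{\mathcal C}$ is \emph{$2$-covered by a building} (compose the $2$-cover $\widetilde R\to R$ with the building universal cover $\hat R\to R$ to obtain $\hat R\to\widetilde R$), and this weaker hypothesis is exactly what the gallery manipulations of your Step 3 require --- one lifts the relevant gallery segment to the building cover $\hat R$, performs the braid rearrangement there, and projects back. The fix is to drop the attempted strengthening in Step 1 and carry the hypothesis ``rank $3$ residues are $2$-covered by buildings'' directly into the homotopy calculus.
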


In particular, the chamber system ${\mathcal C}$ is obtained from ${\mathcal B}$ by factoring out a group
of automorphisms in which no non-trivial element fixes any rank $2$
residue of ${\mathcal B}$.

\subsection{Sheaves}
Let $I$ be an index set, $J \subset I$ and set $K = I \setminus J$. Let $\Gamma$ be a geometry over $K$ which is locally truncated of type $D$ over $I$ and let ${\mathcal F}$ be a family of nonempty flags of $\Gamma$. A {\it sheaf over the geometry} $\Gamma$ is a class of geometries $\lbrace \Sigma(F) \; \text{for} \; F \in {\mathcal F} \rbrace$ together with isomorphisms
\begin{center}
$\varphi_F: \res{\Gamma}{F} \rightarrow \; _J \Sigma (F)$
\end{center}
of geometries over $I \setminus t(F)$. Given a pair of incident flags $F_1 \subseteq F_2$ in ${\mathcal F}$ the connecting homomorphisms of the sheaf are the maps $\varphi _{F_1, F_2} : \Sigma (F_2) \rightarrow \Sigma (F_1)$ with the property that
\begin{center} $\varphi_{F_1, F_2}(\Sigma(F_2)) \simeq \res{\Sigma(F_1)}{F_2 \setminus F_1}$.
\end{center}
Furthermore, they are subject to the following conditions:
\begin{center}
$\varphi _{F_1, F_2} \circ \varphi _{F_2, F_3} = \varphi _{F_1, F_3}$,
\end{center}
for $F_1, F_2, F_3 \in {\mathcal F}$ with $F_1 \subseteq F_2 \subseteq F_3$. To simplify the notation, we will omit the connecting isomorphisms $\varphi _F$ and write $\res{\Gamma}{F} = \; _J \Sigma(F)$ instead.

\medskip
A sheaf $\Sigma$ is {\it residually connected} if and only if for each
object $x$ of the geometry $\Gamma$ the sheaf geometry $\Sigma (x)$ is
residually connected. Due to the functorial relation between the category of geometries and the
category of chamber systems, whenever a sheaf $\Sigma$ exists, there
is a chamber system associated to it \cite[Lemma 1]{loc}.

\subsection{Spaces}
This section is devoted to the basic concepts used in Section $3$; for a good review on spaces and related topics the reader is referred to \cite[Chapter 12]{hnbk}.

\medskip
A {\it space} $\Gamma$ is a pair $({\mathcal P}, {\mathcal L})$ consisting of a nonempty set ${\mathcal P}$, whose members are called {\it points}, and a collection ${\mathcal L}$ of subsets of ${\mathcal P}$ of cardinality at least two, whose members are called {\it lines}. Let $p, q \in {\mathcal P}$ be two distinct points. We say that $p$ {\it is collinear to} $q$ if both $p$ and $q$ lie in some line $L \in {\mathcal L}$. The set of all points of ${\mathcal P}$ collinear with $p$, including $p$ itself, will be denoted $p^{\perp}$; it is called {\it the perp of} $p$. A space is a {\it partial linear space} if two distinct points lie in at most one line. A space is a {\it gamma space} if it is a partial linear space and for any $p \in {\mathcal P}$ and $L \in {\mathcal L}$ the set $p^{\perp} \cap L$ is empty, a point or $L$.

\medskip
The {\it collinearity graph} of a space is the graph whose vertex set is ${\mathcal P}$ and in which two points are adjacent if they are distinct and collinear. Given two points $p, q \in {\mathcal P}$, the distance between $p$ and $q$ in the collinearity graph will be denoted $d(p,q)$.

\medskip
A subset $X$ of the point set ${\mathcal P}$ is a {\it subspace} of $\Gamma$ if every line $L \in {\mathcal L}$ meeting $X$ in at least two points entirely belongs to $X$. A subspace is {\it singular} if every two points of $X$ are collinear. A subspace is $2$-{\it convex} if for all pairs of points $p, q \in X$ with $d(p,q)=2$, each point which is collinear with both $p$ and $q$ is also contained in $X$.

\medskip
The {\it singular rank} of a space $\Gamma$ is the length of the longest chain of distinct nonempty singular subspaces $X_0 \subset X_1 \subset \ldots \subset X_n$. In what follows the term {\it rank} will denote the projective rank of a singular subspace of $\Gamma$, that is one less than the vector space dimension. A polar space is said to be of rank $n$ if its singular rank is $n-1$.

\medskip
A {\it parapolar space} is a connected partial linear gamma space possessing a collection of $2$-convex subspaces $\mathcal S$, called {\it symplecta}, isomorphic to nondegenerate polar spaces of rank at least $2$, with the properties that each line is
contained in a symplecton and each quadrangle is contained in a unique symplecton. A parapolar space in which every pair of points at distance $2$ belongs to a symplecton, is also called a {\it strong parapolar space}. Let $p, q \in {\mathcal P}$ be two points at distance two in a parapolar space. If $|p ^{\perp} \cap q^{\perp}|=1$ then $\lbrace p, q \rbrace$ is called a {\it special pair}; if $|p^{\perp} \cap q^{\perp}|>1$ then $\lbrace p, q \rbrace$ is a {\it polar pair}. If $\lbrace p, q \rbrace$ is a polar pair, then the convex closure of $p$ and $q$ is the unique symplecton containing the two points which will be denoted $\ll p, q \gg$. In a parapolar space all the singular subspaces are projective spaces. In a parapolar space {\pl}, the {\it residue of a point} $p \in {\mathcal P}$ is the space ${\rm Res}_{\Gamma}p = ({\mathcal L}_p, \pi_p)$ induced on the lines and the planes which contain $p$.

\medskip
Let ${\mathfrak D}_n$ be a Coxeter diagram with $n$ nodes and select a node $i$ in the diagram. A space is said to be of type ${\mathfrak D}_{n,i}$ if it is the shadow space over $i$ of a building of type ${\mathfrak D}_n$; see \cite[Chapter 12, Section 4.7]{hnbk}.

\subsubsection{The space of type $A_{7,4}$}
Let $V$ be an $8$-dimensional vector space over some division ring $\mathbb{K}$. Define the space {\pl}  whose points $\mathcal{P}$ are the $4$-subspaces of $V$ and whose lines $\mathcal{L}$ are the $(3,5)$-flags of $V$. Then $\Gamma$ is a strong parapolar space whose symplecta are polar spaces of type $D_{3,1}$. Let $ \mathcal S$ denote the family of symplecta. The maximal singular subspaces ${\mathcal M}$ have rank $4$ and can be partitioned into two classes ${\mathcal A}$ and ${\mathcal B}$, according to the property {\it(G1)} below. We list some of the properties of the space of type $A_{7,4}$; these properties can be derived by linear algebra arguments. A characterization of the spaces of type $A_{n,j}$ can be found in \cite{coh}.

\begin{itemize}
\item[{\it(G1)}] If $M_1,M_2 \in {\mathcal M}$ are two distinct maximal singular
subspaces belonging to the same class then $  M_1 \cap  M_2$ is either empty or a point. If they belong to different classes then $  M_1 \cap   M_2$ is either empty or a line.
\item[{\it(G2)}] If $p \in {\mathcal P}$, $S \in {\mathcal S}$, $p \not\in S$ then $ p^{\perp} \cap S$ is empty, a point or a plane.
\item[{\it(G3)}] If $S \in {\mathcal S}$ and $M \in {\mathcal M}$ then $S \cap M$ is empty, a point or a plane.
\item[{\it(G4)}] If $( p,M) \in {\mathcal P} \times {\mathcal M}$ and $ p \not\in  M$ then $p^{\perp} \cap  M$ is either empty or a line.
\end{itemize}

\subsubsection{The space of type $D_{6,6}$}
This is the space {\pl} whose points ${\mathcal P}$ are one class of maximal singular subspaces of the polar space of type $D_{6,1}$. The lines ${\mathcal L}$ are the rank $3$ singular subspaces of the same polar space. Then $\Gamma$ is a strong parapolar space, with a family ${\mathcal S}$ of symplecta which are polar spaces of type $D_{4,1}$. Let ${\mathcal S}$ denote the collection of symplecta. There are two classes of maximal singular subspaces ${\mathcal A}$, whose elements
have rank $5$ and ${\mathcal B}$, a class of rank $3$ singular subspaces.
The diagram of a space of type $D_{6,6}$ is given below:\\

\begin{picture}(1000, 50)(0,0)
\put(0,25){$D_{6,6}$}
\put(74,26){$\circ$}
\put(76,8){${\mathcal C}$}
\put(80,29){\line(1,0){40}}
\put(120,26){$\circ$}
\put(122,8){${\mathcal S}$}
\put(126,29){\line(1,0){40}}
\put(166,26){$\circ$}
\put(168,8){${\mathcal B}$}
\put(172,29){\line(1,0){40}}
\put(212,26){$\circ$}
\put(220,8){${\mathcal L}$}
\put(215,-10){\line(0,1){37}}
\put(212,-16){$\circ$}
\put(220,-24){${\mathcal P}$}
\put(218,29){\line(1,0){40}}
\put(258,26){$\circ$}
\put(260,8){${\mathcal A}$}
\end{picture}

\vspace*{1cm}
The space of type $D_{6,6}$ was initially characterized by Cohen and Cooperstein \cite[Theorem 4]{cc}:

\begin{thm}[Cohen and Cooperstein]Let {\pl} be a strong parapolar space of singular rank $5$, which is not a polar space and whose symplecta have rank $4$. Assume that, given a point-symplecton pair $(p, S) \in {\mathcal P} \times {\mathcal S}$ with $p \not \in S$, the set $p^{\perp} \cap S$ is either a point or a maximal singular subspace of $S$. Then $\Gamma$ is a space of type $D_{6,6}$.\\
\end{thm}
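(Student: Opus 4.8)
The plan is to reconstruct, out of the local axioms, the full oriflamme geometry of a building of type $D_6$, and then to exhibit $\Gamma$ as its half-spin shadow space. Throughout, dimension means projective dimension. First I would analyze a single symplecton $S \in \mathcal{S}$. Being a nondegenerate polar space of rank $4$, its maximal singular subspaces (generators) have dimension $3$. Since $\Gamma$ is connected and properly contains $S$, there are points $p \notin S$ with $p^{\perp} \cap S$ a generator $G$ of $S$; as $p$ is collinear with every point of $G$, the gamma-space axiom makes $\langle p, G \rangle$ a singular subspace of dimension $4$, which iterating and using singular rank $5$ I would complete to a maximal singular subspace $M$ of dimension $5$ with $M \cap S = G$. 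Comparing this with the generators of $S$ that are already maximal in $\Gamma$ (of dimension $3$), I would obtain on $S$ two distinct systems of maximal singular subspaces; a rank-$4$ polar space carrying two systems of generators is necessarily of hyperbolic type $D_4$. This simultaneously classifies every symplecton as $D_4$ and partitions the maximal singular subspaces of $\Gamma$ into a class $\mathcal{A}$ of dimension $5$ and a class $\mathcal{B}$ of dimension $3$, whose intersection pattern (two of the same class meeting in at most a point, two of different classes in at most a line) I would read off from the same analysis.

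Next I would determine the residue $\res{\Gamma}{p}$ at an arbitrary point $p$, the space induced on the lines and planes through $p$. It inherits from $\Gamma$ the structure of a strong parapolar space; its symplecta are the residues $\res{S}{p}$ of the $D_4$ symplecta through $p$, hence Klein quadrics of type $A_{3,2} \cong D_{3,1}$, and its two classes of maximal singular subspaces descend from $\mathcal{A}$ and $\mathcal{B}$. Verifying that $\res{\Gamma}{p}$ has singular rank $4$ and that its own point-symplecton intersections are again points or generators, I would identify it, via the known characterization of the spaces $A_{n,2}$, with the Grassmannian $A_{5,2}$ of lines of a $5$-dimensional projective space. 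The outcome is local homogeneity: all point residues are isomorphic to the same $A_{5,2}$.

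I would then manufacture the higher object types needed to fill the $D_6$ diagram $\mathcal{C}-\mathcal{S}-\mathcal{B}-\mathcal{L}$ with the fork $\mathcal{L}-\mathcal{P}$, $\mathcal{L}-\mathcal{A}$. The symplecta $\mathcal{S}$ and the classes $\mathcal{A}, \mathcal{B}$ are already available; it remains to construct the family $\mathcal{C}$ of maximal convex subspaces of type $D_{5,5}$. The residue picture guides this: in $A_{5,2}$ the sub-Grassmannians carried by the hyperplanes $PG(4)$ of $PG(5)$ are convex subspaces of type $A_{4,2}$, and these are exactly the residues $\res{C}{p}$ that a $D_{5,5}$-subspace $C \ni p$ ought to have. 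Defining the members of $\mathcal{C}$ as suitable convex closures, I would prove, from the strong parapolar and gamma-space axioms, that each is $2$-convex and of type $D_{5,5}$ and that every symplecton lies in the expected number of them. Assembling $\mathcal{P}, \mathcal{L}, \mathcal{B}, \mathcal{A}, \mathcal{S}, \mathcal{C}$ under symmetrized incidence yields a rank-$6$ geometry $\Delta$; I would check that $\Delta$ is residually connected and that each rank-$2$ residue is the geometry prescribed by the $D_6$ Coxeter diagram --- a projective plane on every edge and a generalized digon on every non-edge --- now reading these directly off the determined local structure.

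Finally, passing to the chamber system $\mathbf{C}(\Delta)$, every rank-$3$ residue is of spherical type $A_3$ (possibly with an $A_1$ factor), whose universal $2$-cover is a building; Tits' Local Approach Theorem then gives that the universal $2$-cover of $\mathbf{C}(\Delta)$ is a building $\mathcal{B}$ of type $D_6$. A simple-connectivity argument identifies $\Delta$ with the oriflamme geometry of $\mathcal{B}$, so that $\Gamma$, as the shadow of $\Delta$ over the node $\mathcal{P}$, is the half-spin shadow space of a building of type $D_6$, i.e. a space of type $D_{6,6}$. The main obstacle I expect lies in this global step rather than in any isolated local computation: constructing the family $\mathcal{C}$ and proving its members convex and mutually coherent, and then upgrading the uniform local isomorphisms of the second step to an actual building through Tits' theorem --- that is, checking the covering hypothesis on every rank-$3$ residue and securing the residual connectivity and simple connectivity that rule out proper quotients. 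By comparison, the determination that the symplecta are of type $D_4$, though delicate, is forced fairly directly by the point-symplecton axiom.
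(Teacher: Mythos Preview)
The paper does not prove this theorem. It is stated in Section~2.5.2 as a known result of Cohen and Cooperstein, with a citation to \cite{cc}, and is then invoked as a black box in the proof of Theorem~4.1 to conclude that each $D_{\mathcal X}(S)$ is of type $D_{6,6}$. There is therefore no proof in the paper against which to compare your proposal.

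That said, a remark on your sketch. Your strategy --- enrich $\Gamma$ to a rank-$6$ diagram geometry, pass to the chamber system, and apply Tits' Local Approach Theorem --- mirrors what the present paper does for its \emph{own} main result (Theorem~5.1), not what Cohen and Cooperstein do for theirs. The difficulty you flag at the end is real and is the actual crux: Tits' theorem only tells you that the universal $2$-cover of $\mathbf C(\Delta)$ is a building, so to conclude that $\Gamma$ itself (rather than a proper quotient) is the half-spin space you must establish simple connectivity of $\Delta$ independently. Your phrase ``a simple-connectivity argument identifies $\Delta$ with the oriflamme geometry'' names the step but does not supply it, and nothing in the hypotheses hands it to you for free. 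Likewise the construction of the family $\mathcal C$ of $D_{5,5}$-subspaces as convex closures, and the verification that they are genuinely $2$-convex and sit in the right incidence pattern with $\mathcal S$, is substantial and is where most of the work in a direct proof would live. The local analysis you outline (symplecta of type $D_4$, point residues $A_{5,2}$) is sound and is indeed the easier part.
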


\section{The maximal singular subspaces}

Suppose that ${\widetilde \Gamma} = ({\widetilde{\mathcal P}}, {\widetilde{\mathcal L}})$ is parapolar space which is locally of type $A_{2n-1,n}$ with $n$ a positive integer.

\smallskip
The Grassmann space $A_{2n-1, n}(\mathbb{K})$ and its quotient $A_{2n-1, n}(\mathbb{K})/\langle \sigma \rangle$, where $\mathbb{K}$ is an infinite division ring and where $\sigma$ is an involutory automorphism of $A_{2n-1,n}(\mathbb{K})$ induced by a polarity of the underlying projective space of Witt index at most $n-4$, have the same diagram. In this case ${\widetilde \Gamma}$ can have point residuals of both types $A_{2n-1, n}(\mathbb{K})$ and $A_{2n-1, n}(\mathbb{K})/ \langle \sigma \rangle$. In the second geometry the maximal singular subspaces are fused in one single family; see \cite[Section 6]{coh} for example. Since the case of interest here is $n=4$, the Grassmann space of type $A_{7,4}$ is too ``small" and it does not have nontrivial quotients which are parapolar spaces, thus a local partition of the maximal singular subspaces into two classes can be attained (see Section 2.5.1 for details).

\medskip
However, all the maximal singular subspaces of ${\widetilde \Gamma}$ have the same rank and there is no global partition in two classes according to the rank. In order to overcome this difficulty, we shall use a result of Kasikova and Shult \cite[Section 3.4]{ks} and we shall construct a covering of ${\widetilde \Gamma}$ in which the partition of the maximal singular subspaces in two global classes can be realized. We adapt Theorem $12$ from \cite{ks} to the case of interest here and for completeness we also provide a proof following \cite{ks}.

\begin{thm}[Kasikova and Shult] Let ${\widetilde \Gamma} = ({\widetilde {\mathcal P}}, {\widetilde {\mathcal L}})$ be a parapolar space which is locally of type $A_{7,4}$. Let $I = \lbrace 1, \ldots 8 \rbrace$ and $J = \lbrace 1,2,7,8 \rbrace$.  Then there is a $J$-locally truncated connected geometry $\Gamma$ with diagram:\\
\begin{picture}(1000, 50)(0,0)
\put(0,25){$ \mathcal{Y'}$}
\put(45,25){$\qed$}
\put(60,15){\scriptsize $1$}
\put(65,29){\line(1,0){40}}
\put(95,25){$\qed$}
\put(110,15){\scriptsize ${2}$}
\put(115,29){\line(1,0){40}}
\put(155,26){$\circ$}
\put(157,15){\scriptsize ${3}$}
\put(160,29){\line(1,0){40}}
\put(200,26){$\circ$}
\put(210,15){\scriptsize $5$}
\put(203,-3){\line(0,1){30}}
\put(200,-9){$\circ$}
\put(210,-10){\scriptsize $4$}
\put(206,29){\line(1,0){40}}
\put(246,26){$\circ$}
\put(248,15){\scriptsize $6$}
\put(252,29){\line(1,0){40}}
\put(280,25){$\qed$}
\put(295,15){\scriptsize $7$}
\put(300,29){\line(1,0){40}}
\put(330,25){$\qed$}
\put(345,15){\scriptsize $8$}
\end{picture}

\vspace{.5cm}
and a fibering morphism of geometries $\phi_{\Gamma}: \Gamma \rightarrow {\widetilde \Gamma}$ which induces a one- or two-fold $\mathcal {T}$-covering $\varphi _{\Gamma}: \Delta _{\Gamma} \rightarrow \Delta_{\widetilde \Gamma}$ of the point-collinearity graph of ${\widetilde \Gamma}$.
\end{thm}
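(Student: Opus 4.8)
The plan is to realize $\Gamma$ as the unfolding of a $\mathbb{Z}/2$-monodromy, following the proof of \cite[Theorem 12]{ks}: although all maximal singular subspaces of $\widetilde\Gamma$ have the same rank, \emph{locally} they split into two classes, and the failure of this local splitting to globalize is governed by a sign system on the point-collinearity graph $\Delta_{\widetilde\Gamma}$. For a point $p$ the residue $\res{\widetilde\Gamma}{p}$ is of type $A_{7,4}$, so by \textit{(G1)} the set $\mathcal{M}_p$ of maximal singular subspaces of $\widetilde\Gamma$ through $p$ carries an intrinsic partition $\pi_p$ into two parts, two members lying in the same part exactly when they meet in the empty set or in a point. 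A \emph{local sign} at $p$ is a labelling of the two parts of $\pi_p$ by $\{+,-\}$; there are exactly two, interchanged by the flip. The first check is the compatibility of local signs along a line: if $p\sim q$ on $L=pq$, then each member of the set $\mathcal{M}_L$ of maximal singular subspaces through $L$ contains $L$, any two of them meet in $L$ or in a plane, and applying \textit{(G1)} in $\res{\widetilde\Gamma}{p}$ and in $\res{\widetilde\Gamma}{q}$ shows that $\pi_p$ and $\pi_q$ induce one and the same partition of $\mathcal{M}_L$, with $\mathcal{M}_L$ meeting both of its parts. Hence each local sign at $p$ restricts to a sign on $\mathcal{M}_L$, and for every local sign $\epsilon_p$ at $p$ there is a unique local sign $\epsilon_q$ at $q$ inducing the same sign on $\mathcal{M}_L$. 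I would then let $\Delta$ be the graph on the pairs $(p,\epsilon_p)$, where $(p,\epsilon_p)$ is adjacent to $(q,\epsilon_q)$ iff $p\sim q$ and the two signs agree on $\mathcal{M}_{pq}$, and let $\varphi\colon\Delta\to\Delta_{\widetilde\Gamma}$ be the projection $(p,\epsilon_p)\mapsto p$.

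The uniqueness just noted is precisely the statement that $\varphi$ is bijective on every ball of radius one, so $\varphi$ is a covering of graphs; transporting a local sign along a path is then well defined up to the resulting $\mathbb{Z}/2$-monodromy, and $\Delta$ is connected when this monodromy is onto ($\varphi$ then two-fold) and a disjoint union of two isomorphically mapped copies of $\Delta_{\widetilde\Gamma}$ when it is trivial ($\varphi$ then one-fold); I take $\Gamma$ to be one connected component. It then remains to lift to $\Gamma$ the lines, planes, symplecta and maximal singular subspaces of $\widetilde\Gamma$, which comes down to the triviality of the sign-transport monodromy around each closed path contained in such a subspace. For a maximal singular subspace $M$ this is immediate: by the adjacency rule the sign of the part of $\pi_p$ containing $M$ is constant along $M$, so $M$ lifts, moreover with a well-defined sign. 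For a line it follows directly from the adjacency rule, and every plane lies in a symplecton, which reduces matters to a symplecton $S$; here $S$ is a polar space of type $D_{4,1}$, and one identifies the restriction to $S$ of the sign system with the datum recording, at each point $s\in S$ and each $M\in\mathcal{M}_s$ whose trace $M\cap S$ is a generator of $S$, the family of that generator, using the point-symplecton and symplecton--maximal-singular-subspace incidence inherited from the $A_{7,4}$ residues (properties \textit{(G2)}--\textit{(G3)}). Since the generators of $D_{4,1}$ fall into two globally defined families, this monodromy is trivial and $S$ lifts. Thus $\varphi$ is a $\mathcal{T}$-covering, with $\mathcal{T}$ the family of lines, planes, symplecta and maximal singular subspaces, and by the case of $M$ every maximal singular subspace of $\Gamma$ carries a global sign, so that the maximal singular subspaces of $\Gamma$ split as $\mathcal{A}\sqcup\mathcal{B}$.

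I would then take $\Gamma$ to be the geometry over $\{3,4,5,6\}$ whose objects of type $4$ are its points, of type $5$ its lifted lines, and of types $3$ and $6$ the maximal singular subspaces of the two families, with incidence and residues lifted from $\widetilde\Gamma$, and $\phi_\Gamma$ the projection. Because $\varphi$ is an isomorphism on each member of $\mathcal{T}$ and on every ball of radius one, $\phi_\Gamma$ carries the residue in $\Gamma$ of a point, of a line and of a maximal singular subspace isomorphically onto the corresponding residue in $\widetilde\Gamma$; hence every rank-$2$ residue of $\Gamma$ lies in the class prescribed by $\mathcal{Y'}$, so $\Gamma$ is $J$-locally truncated with diagram $\mathcal{Y'}$, and $\phi_\Gamma$, being object-surjective and residue-bijective, is a fibering morphism of geometries. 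Connectedness of $\Gamma$ holds by the choice of component, and $\varphi_\Gamma:=\varphi$ is the asserted one- or two-fold $\mathcal{T}$-covering of point-collinearity graphs.

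Every step except one is the routine bookkeeping of the unfolding construction and can be read off from \cite{ks}; the step that requires genuine care is the triviality of the sign-transport monodromy inside a $D_{4,1}$-symplecton, that is, the identification of the restricted sign system there with the two-family decomposition of the generators. This is exactly the point where the local properties \textit{(G1)}--\textit{(G4)} of the space $A_{7,4}$ must be used in detail.
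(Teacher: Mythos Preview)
Your construction is exactly the paper's: your pairs $(p,\epsilon_p)$ are the paper's $(\widetilde p,\widetilde{\mathcal X}_{\widetilde p})\in\mathbb P$, your graph $\Delta$ is the point-collinearity graph of the paper's $\Upsilon=(\mathbb P,\mathbb L)$, and the bijectivity on radius-one balls is the paper's check that $\phi$ is one-to-one on neighbor graphs.

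The one place you diverge is in what you feel obliged to lift. The paper verifies only that $3$-circuits in $\Delta_{\widetilde\Gamma}$ lift, which is immediate: by the gamma-space property any triangle spans a singular plane, hence lies in some maximal singular subspace $M$, along which (as you yourself note) the sign is constant. That is all the $\mathcal T$-covering of \cite[Section 2]{ks} requires. So the step you flag as ``the step that requires genuine care''---the sign monodromy inside a $D_{4,1}$ symplecton, identified with the two generator families---is not actually needed for this theorem: symplecta are not objects of the rank-$4$ geometry $\Gamma$ over $\{3,4,5,6\}$, and their lifting is subsumed by the triangle argument. Your $D_{4,1}$ observation is correct and is the natural way to see that symplecta lift, but it belongs with Remark~3.2 rather than with the proof of the theorem itself.
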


\begin{proof} Let ${\widetilde \Gamma} = ({\widetilde{\mathcal P}}, {\widetilde{\mathcal L}})$ be a parapolar space which is locally of type $A_{7,4}$. Then for each point ${\widetilde p} \in {\widetilde{\mathcal P}}$ the residue ${\rm Res}_{\widetilde \Gamma}{\widetilde p}$ contains two classes of maximal singular subspaces ${\widetilde{\mathcal A}}_{\widetilde p}$ and ${\widetilde {\mathcal B}}_{\widetilde p}$ whose elements are projective spaces of rank $5$. Let ${\widetilde {\mathcal M}}_{\widetilde p} = {\widetilde {\mathcal A}}_{\widetilde p} \cup {\widetilde {\mathcal B}}_{\widetilde p}$ and set ${\widetilde {\mathcal M}} = \lbrace {\widetilde {\mathcal M}} _{\widetilde p} \mid {\widetilde p} \in {\widetilde {\mathcal P}} \rbrace$. Each plane ${\widetilde \pi}$ on ${\widetilde p}$ lies in exactly one element from each class. Let ${\mathbb P}$ denote the set of pairs $({\widetilde p}, {\widetilde{\mathcal X}}_{\widetilde p})$ where ${\widetilde p} \in {\widetilde{\mathcal P}}$ and ${\widetilde{\mathcal X}}_{\widetilde p}$ is one of the classes ${\widetilde{\mathcal A}}_{\widetilde p}$ or ${\widetilde{\mathcal B}}_{\widetilde p}$. Let ${\mathbb L}$ denote the collection of pairs $({\widetilde L}, {\widetilde{\mathcal X}}_{\widetilde L})$ with ${\widetilde L} \in {\widetilde {\mathcal L}}$ and ${\widetilde{\mathcal X}}_{\widetilde L}$ a class of maximal singular subspaces containing ${\widetilde L}$. We say that $({\widetilde p}, {\widetilde{\mathcal X}}_{\widetilde p})$ is incident with $({\widetilde L}, {\widetilde{\mathcal X}}_{\widetilde L})$ if and only if ${\widetilde p}$ is incident with ${\widetilde L}$ in ${\widetilde \Gamma}$ and ${\widetilde{\mathcal X}}_{\widetilde L} \subseteq {\widetilde{\mathcal X}}_{\widetilde p}$. Thus $\Upsilon = ({\mathbb P},{\mathbb L})$ is a point-line geometry.

\medskip
The geometry morphism $\phi: \Upsilon \rightarrow {\widetilde \Gamma}$ induced by the projection onto the first coordinate is vertex surjective. Let now $({\widetilde p}, {\widetilde{\mathcal X}}_{\widetilde p}) \in {\mathbb P}$ and $({\widetilde L}_i, {\widetilde{\mathcal X}}_{{\widetilde L}_i}) \in {\mathbb L}$ with $i=1,2$, be two lines incident with $({\widetilde p}, {\widetilde{\mathcal X}}_{\widetilde p})$. Observe that $\phi(({\widetilde L}_1, {\widetilde{\mathcal X}}_{{\widetilde L}_1})) = \phi(({\widetilde L}_2, {\widetilde{\mathcal X}}_{{\widetilde L}_2}))$ implies that ${\widetilde L}_1 = {\widetilde L}_2$. Since $({\widetilde p}, {\widetilde{\mathcal X}}_{\widetilde p})$ is incident with both lines we must have  ${\widetilde {\mathcal X}}_{{\widetilde L}_1} = {\widetilde{\mathcal X}}_{{\widetilde L}_2}$ and therefore $({\widetilde L}_1, {\widetilde{\mathcal X}}_{{\widetilde L}_1}) = ({\widetilde L}_2, {\widetilde{\mathcal L}}_{{\widetilde L}_2})$. A similar argument can be applied if we start with a line in ${\mathbb L}$ and two points in ${\mathbb P}$ incident with it. Thus $\phi$ is one-to-one when restricted to neighbor graphs, the set of all the lines (or points) which are incident with a fixed point (or line respectively).

\medskip
The map $\phi$ also induces a vertex surjective morphism on the point-collinearity graphs: $\varphi: \Delta_{\Upsilon} \rightarrow \Delta_{\widetilde \Gamma}$ which restricts to an isomorphism $({\widetilde p}, {\widetilde{\mathcal X}}_{\widetilde p})^{\perp_{\Upsilon}} \rightarrow {\widetilde p}^{\perp_{\widetilde{\Gamma}}}$. Furthermore, all $3$-circuits in $\Delta_{\widetilde \Gamma}$ lift to $3$-circuits in $\Delta_{\Upsilon}$. In the language of \cite[Section 2]{ks}, the restriction $\varphi _{\Gamma}$ of $\varphi$ to a connected component of $\Delta_{\Upsilon}$ is a $\mathcal{T}$-covering.

\medskip
In $\Upsilon$ there are two global classes of maximal singular subspaces. They can be defined as follows: for any ${\widetilde M} \in {\widetilde {\mathcal M}}$ let $\mathcal{A}_{\widetilde M} = \lbrace ({\widetilde p}, {\widetilde{\mathcal X}}_{\widetilde p}({\widetilde M}))| {\widetilde p} \in {\widetilde M} \rbrace$ where ${\widetilde {\mathcal X}}_{\widetilde p}({\widetilde M})$ denotes the class ${\widetilde{\mathcal A}}_{\widetilde p}$ or ${\widetilde{\mathcal B}}_{\widetilde p}$ which contains ${\widetilde M}$ and $\mathcal{B}_{\widetilde M} = \lbrace ({\widetilde p}, {\widetilde{\mathcal X}'}_{\widetilde p}({\widetilde M}))| {\widetilde p} \in {\widetilde M} \rbrace$ where ${\widetilde {\mathcal X}'}_{\widetilde p}({\widetilde M})$ denotes the class which does not contain ${\widetilde M}$. Then the two families of maximal singular subspaces are ${\mathcal A} = \lbrace {\mathcal A}_{\widetilde M} | {\widetilde M} \in {\widetilde {\mathcal M}} \rbrace$ and $\mathcal{B} = \lbrace {\widetilde {\mathcal B}}_{\widetilde M} | {\widetilde M} \in {\widetilde {\mathcal M}} \rbrace$.

\medskip
It is quite clear that $\Upsilon$ can be either connected (when the local classes of maximal singular subspaces of ${\widetilde \Gamma}$ are fused) or disconnected (when a global separation is possible) and has two connected components. Let $\Gamma$ be a connected component of $\Upsilon$. This is mapped by $\phi$ as either a one-to-one mapping or a two-to-one mapping depending on whether $\Gamma$ is a proper subgeometry or not. So if we denote by $\phi_{\Gamma}$ the restriction of $\phi$ to $\Gamma$ we obtain the morphism from the conclusion of the theorem.
\end{proof}

\begin{rem} The point-line geometry $\Gamma = ({\mathcal P}, {\mathcal L})$ is connected and locally isomorphic to ${\widetilde \Gamma}$, which means that $\Gamma$ is locally of type $A_{7,4}$, hence it belongs to diagram $\mathcal{Y}'$. The maximal singular subspaces of $\Gamma$ partition in two classes which we shall denote ${\mathcal A}$ and ${\mathcal B}$. Then $\Gamma$ is a parapolar space with a class of symplecta ${\mathcal S}$ of type $D_{4,1}$.
\end{rem}

\begin{rem} Assume ${\widetilde \Gamma}$ satisfies the Weak Hexagon Axiom {\it (WHA)}. It is a consequence of the definition of the fibering morphism of geometries that ({\it WHA}) is also valid in $\Gamma$. Therefore, in our analysis we can replace the space ${\widetilde \Gamma}$ with its covering $\Gamma$, gaining in this way the advantage of being able to partition the maximal singular subspaces into two classes.
\end{rem}

\section{The family ${\mathcal D}$ of subspaces}

Throughout this section {\pl} will denote a parapolar space which is locally of type $A_{7,4}$ and it is constructed as in the proof of Theorem $3.1$ of the previous section. The local assumption means that if $p \in {\mathcal P}$ then ${\rm Res}_{\Gamma} p$ is a space of type $A_{7,4}$. Recall the basic notions on spaces and related structures from Section 2.5. The result of the previous section allows to obtain a global partition of the maximal singular subspaces into two classes; see {\it (P1)} below for notations and details. Next, $\Gamma$ contains a set of symplecta ${\mathcal S}$ whose elements are (non-degenerate) polar spaces of type $D_{4,1}$.

\medskip
Let $S$ be a symplecton in $\Gamma$ and let ${\mathcal X} \in \lbrace {\mathcal A}, {\mathcal B} \rbrace$. We define the following two sets associated to $S$:

$$N_{\mathcal X}(S)= \lbrace p \in {\mathcal P} \setminus S \mid
p^{\perp}
\cap S \in M_{\mathcal X}(S) \rbrace$$
and
\begin{equation*}
\begin{split}
G_{\mathcal X}(S) & = \lbrace p \in {\mathcal P} \setminus S \mid
p^{\perp} \cap S =
\lbrace q \rbrace, \text{ a point};\; \text{for any}\; r \in q^{\perp}\cap S\\
&\quad \text{the pair} \; \lbrace p,r \rbrace \; \text{is polar}; \; \text{for some} \; A \in {\mathcal A}_q \cap {\mathcal A}(S), \; p^{\perp} \cap A \; \text{is a plane} \rbrace.
\end{split}
\end{equation*}

\smallskip
Then set
$$D_{\mathcal X}(S)= S \cup N_{\mathcal X}(S) \cup G_{\mathcal X}(S)$$
and let ${\mathcal D}_{\mathcal X}= \lbrace D_{\mathcal X}(S)|\; S \in {\mathcal S} \rbrace$. Finally denote ${\mathcal D} = {\mathcal D}_{\mathcal A} \cup {\mathcal D}_{\mathcal B}$.

\medskip
The main result of this section is:

\begin{thm} Let {\pl} be a parapolar space  which is locally of type $A_{7,4}$ and which satisfies:

\smallskip
The Weak Hexagon Axiom {\it (WHA)}: Let $H = (p_1, \ldots p_6)$ be a
$6$-circuit, isometrically embedded in $\Gamma$, this means that $p_i \in p^{\perp}_{i+1}$, indices taken mod $6$, and all the other pairs are not collinear. Also assume that at least one of the pairs of points at distance two, say $\lbrace p_1, p_3 \rbrace$, is polar. Then there exists a point $w \in p_1^{\perp} \cap p_3^{\perp} \cap p_5^{\perp}$.

\smallskip
Then there exist two collections of $2$-convex subspaces ${\mathcal D}_{\mathcal A}$ and ${\mathcal D}_{\mathcal B}$, whose elements are spaces of type $D_{6,6}$. Every symplecton $S \in {\mathcal S}$ lies in exactly one element from each class.
\end{thm}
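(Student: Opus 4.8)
The plan is to fix a symplecton $S\in{\mathcal S}$ together with a class ${\mathcal X}\in\lbrace{\mathcal A},{\mathcal B}\rbrace$, and to prove that $D:=D_{\mathcal X}(S)$, equipped with the lines of $\Gamma$ that it contains, is a $2$-convex subspace of $\Gamma$ which, as a point-line geometry, is a strong parapolar space of singular rank $5$, not itself a polar space, whose symplecta have rank $4$, and which meets the hypothesis of the Cohen--Cooperstein characterization: for every symplecton $S'$ of $D$ and every $p\in D\setminus S'$, the trace $p^{\perp}\cap S'$ is a point or a maximal singular subspace of $S'$. Once this is in place, Theorem~2.2 forces $D$ to be of type $D_{6,6}$; since the construction is symmetric in the two classes, it produces both families ${\mathcal D}_{\mathcal A}$ and ${\mathcal D}_{\mathcal B}$. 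I would open with the local bookkeeping the rest of the argument rests on: since $\Gamma$ is locally of type $A_{7,4}$, the point residues and the axioms {\it(G1)}--{\it(G4)} give the corresponding global facts for $\Gamma$ — each symplecton is a non-degenerate $D_{4,1}$ polar space whose two families of maximal singular subspaces extend canonically into the two global classes ${\mathcal A}$, ${\mathcal B}$ of $\Gamma$ (this is what makes $\mss{\mathcal X}{S}$, ${\mathcal A}_q$ and the set of class-${\mathcal A}$ maximal singular subspaces of $\Gamma$ meeting $S$ in a maximal singular subspace of $S$ well defined), the maximal singular subspaces of $\Gamma$ have rank $5$, and, for $p\notin S$ and for $p\notin M$ a maximal singular subspace, the traces $p^{\perp}\cap S$ and $p^{\perp}\cap M$ are restricted to a short list of possibilities (for $p^{\perp}\cap S$: empty, a point, a line, or a maximal singular subspace of $S$).

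\textbf{Step 1: $D$ is a subspace.} I would show that any line $L$ of $\Gamma$ meeting $D$ in at least two points lies entirely in $D$, by case analysis on where the two given points of $L$ lie: both in $S$; one in $S$ and one in $N_{\mathcal X}(S)$; one in $S$ and one in $G_{\mathcal X}(S)$; both outside $S$. In each case one determines, for a remaining point $r\in L$, the trace $r^{\perp}\cap S$ — and, when $r$ should fall in $G_{\mathcal X}(S)$, the two extra conditions in the definition of $G_{\mathcal X}(S)$ as well — by transporting the trace along the gamma-space axiom inside the relevant maximal singular subspace or symplecton and invoking {\it(G1)}--{\it(G4)}. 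The cases in which $L$ misses $S$, or joins a point of $S$ to a $G_{\mathcal X}(S)$-point, are precisely where the Weak Hexagon Axiom intervenes: a point $r\in L$ whose trace on $S$ threatens to be too small is placed at a vertex of an isometrically embedded $6$-circuit one of whose distance-$2$ pairs is polar, and {\it(WHA)} supplies the point $w$ that forces $r^{\perp}\cap S$ into the required shape and the defining properties of $N_{\mathcal X}(S)$ or $G_{\mathcal X}(S)$ to persist along $L$.

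\textbf{Step 2: $2$-convexity and the parapolar structure of $D$.} Granting Step~1, I would verify $2$-convexity — for $p,q\in D$ with $d(p,q)=2$ every common neighbour lies in $D$; as a polar pair generates $\ll p,q\gg$, this amounts to $\ll p,q\gg\subseteq D$, again via trace computations aided by {\it(WHA)} — and then describe $D$: its lines are the lines of $\Gamma$ in $D$; among its singular subspaces are rank-$5$ ones, namely the maximal singular subspaces of $\Gamma$ of class ${\mathcal X}$ meeting $S$ in a maximal singular subspace of $S$ of class ${\mathcal X}$ (such an $M$ lies in $D$ because every $p\in M\setminus S$ has $p^{\perp}\cap S\supseteq M\cap S$, hence $p\in N_{\mathcal X}(S)$), so the singular rank of $D$ equals $5$; and the convex polar subspaces that make $D$ a parapolar space are exactly the symplecta of $\Gamma$ contained in $D$, of type $D_{4,1}$, so $D$ is not a polar space. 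It then remains to check that $D$ is connected (it is assembled from the connected set $S$ by adjoining points collinear with $S$), that $D$ is strong parapolar — here {\it(WHA)} is used to exclude special pairs in $D$ — and that for every symplecton $S'$ of $D$ and $p\in D\setminus S'$ the trace $p^{\perp}\cap S'$ is nonempty and is a point or a maximal singular subspace of $S'$ (in particular that the a priori possibility of a one-dimensional trace, or of an empty trace, does not occur for points of $D$); then Theorem~2.2 applies and $D$ is of type $D_{6,6}$.

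\textbf{Step 3: uniqueness, and the main difficulty.} By construction $S\subseteq D_{\mathcal A}(S)$ and $S\subseteq D_{\mathcal B}(S)$, so it suffices to show a symplecton lies in at most one member of ${\mathcal D}_{\mathcal X}$. If $S'\subseteq D_{\mathcal X}(S)$ then, $D_{\mathcal X}(S)$ being $2$-convex in $\Gamma$, the traces of its points on $S'$ agree whether computed in $D_{\mathcal X}(S)$ or in $\Gamma$; reading off the sets $N$ and $G$ inside the $D_{6,6}$ space $D_{\mathcal X}(S)$ yields $N_{\mathcal X}(S')\cup G_{\mathcal X}(S')\subseteq D_{\mathcal X}(S)$ and shows that $D_{\mathcal X}(S)$ contains no point outside $S'\cup N_{\mathcal X}(S')\cup G_{\mathcal X}(S')$, while the label ${\mathcal X}$ transfers correctly because the two families of maximal singular subspaces of $D_{\mathcal X}(S)$ are the traces on it of the global classes ${\mathcal A}$ and ${\mathcal B}$. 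Hence $D_{\mathcal X}(S')=D_{\mathcal X}(S)$, and every symplecton lies in exactly one member of ${\mathcal D}_{\mathcal A}$ and one of ${\mathcal D}_{\mathcal B}$. I expect the main obstacle to be Step~1 together with the trace condition in Step~2: this is exactly where the loss of a dimension-based partition of the maximal singular subspaces — the very difficulty that motivated the passage to $\Gamma$ — forces the argument to run through isometrically embedded hexagons, so that {\it(WHA)} must be used repeatedly and delicately, most of all to propagate the intricate defining conditions of $G_{\mathcal X}(S)$ along lines and into the symplecta of $D$.
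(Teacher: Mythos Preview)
Your overall plan --- subspace, $2$-convexity, strong parapolar, Cohen--Cooperstein, uniqueness --- matches the paper's, but you are missing its central device and this causes two concrete problems.

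The paper does \emph{not} attack the subspace, $2$-convexity, and trace conditions by direct case analysis on $S$ alone. Instead it first proves two ``change of base symplecton'' lemmas: if $R\cap S$ is a maximal singular subspace of $S$ lying in $M_{\mathcal B}(S)\cap M_{\mathcal B}(R)$, then $D_{\mathcal A}(R)=D_{\mathcal A}(S)$; and if $R\cap S$ is a line with $R\cap{\mathcal A}(S)\neq\emptyset$ (in particular if $R=\ll p,r\gg$ for $p\in G_{\mathcal A}(S)$ and $r\in q^{\perp}\cap S$ where $\lbrace q\rbrace=p^{\perp}\cap S$), then again $D_{\mathcal A}(R)=D_{\mathcal A}(S)$. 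These let every hard configuration be transported to a symplecton $R$ in which one of the two points lies inside $R$, collapsing the case analysis dramatically. In particular, the subspace property is proved \emph{without} {\it(WHA)} at all; {\it(WHA)} is invoked exactly once, in the $2$-convexity proof, for the single case $p,q\in N_{\mathcal A}(S)$ with $\overline A_p\cap\overline A_q=\emptyset$. Your expectation that {\it(WHA)} must be ``used repeatedly and delicately'' in Step~1 and to propagate the $G_{\mathcal X}(S)$-conditions is therefore off target.

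More seriously, your ordering of Step~2 and Step~3 creates a real difficulty. You propose to verify the Cohen--Cooperstein trace hypothesis directly for an arbitrary symplecton $S'\subseteq D$ and then deduce uniqueness from the resulting $D_{6,6}$ structure. The paper does the opposite: it first proves (via the change-of-base lemmas) that every symplecton $R\subseteq D_{\mathcal A}(S)$ satisfies $D_{\mathcal A}(R)=D_{\mathcal A}(S)$; the trace condition for $(p,R)$ is then \emph{immediate} from the very definition of $D_{\mathcal A}(R)$, since $p\in D_{\mathcal A}(R)\setminus R$ forces $p^{\perp}\cap R$ to be a point or an element of $M_{\mathcal A}(R)$. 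Without this identity, checking the trace hypothesis for a symplecton $S'$ floating arbitrarily inside $D$ would amount to redoing the entire uniqueness argument anyway --- your Step~3 sketch tacitly relies on already knowing the $D_{6,6}$ structure, so the two steps as you have them are close to circular. The fix is to prove the change-of-base lemmas first and reverse the order: uniqueness before Cohen--Cooperstein.
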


In order to prove the theorem we first show that each $D_{\mathcal X}$
is a $2$-convex subspace which is a strong parapolar space in its own. Then we use the Cohen-Cooperstein characterization theorem, see Section 2.5.2, to conclude that $D_{\mathcal X}$ is of type $D_{6,6}$.

\medskip
The following properties of $\Gamma$ are consequences of the theory of parapolar spaces and of the local assumption; see also the properties {\it (G1-G4)} given in Section 2.5.1.

\medskip
{\it (P1)} There are two classes of maximal singular subspaces
${\mathcal A}$ and ${\mathcal B}$. The set ${\mathcal M} = {\mathcal
A} \cup
{\mathcal B}$ is a collection of singular subspaces of rank $5$. Two distinct maximal singular
subspaces which belong to the same class can meet at a line, a
point or the empty set. Two maximal singular subspaces from different
classes can meet at a plane, a point or they are disjoint.\\
{\it (P2)} If $(S, M) \in {\mathcal S} \times {\mathcal M}$ then the set $S
\cap M$ can be empty, a point, a line  or a
maximal singular subspace of $S$.\\
{\it (P3)} For $S \in {\mathcal S}$ let ${\mathcal  M}(S)$ be the family of maximal singular subspaces with largest intersection with $S$; observe that ${\mathcal M}(S) = {\mathcal A}(S) \cup {\mathcal B}(S)$. For $M_i \in {\mathcal M}(S),\; i=1,2$, denote ${\overline M}_i = M_i \cap S$. Then ${\overline M}_1 \cap {\overline M}_2$ can be a
point or a plane if ${\overline M}_1, {\overline M}_2$ belong to different classes; ${\overline M}_1 \cap {\overline M}_2$ can be empty or a line when ${\overline M}_1$ and ${\overline M}_2$ are disjoint and belong to the same class.\\
{\it (P4)} If $M \in {\mathcal M}$ and $p \in
{\mathcal P} \setminus  M$ then
$p^{\perp} \cap M$ can be empty, a point or a plane.\\
{\it (P5)} If $S \in {\mathcal S}$ and $p \in {\mathcal P} \setminus S$
then $p^{\perp} \cap S$ can be empty, a point, a line or a maximal singular subspace of
$S$.

\begin{ntn} For $S \in {\mathcal S}$ and ${\mathcal X} \in \lbrace {\mathcal A}, {\mathcal B}
\rbrace$ let $M_{\mathcal X}(S)= \lbrace {\overline X} \mid {\overline X} = X \cap  S
\; \; \text{for some} \; X \in {\mathcal X}(S) \rbrace$ be the two classes of maximal singular subspaces of $S$.
\end{ntn}

\begin{ntn} In what follows if $p \in {\mathcal
P}$ and $S \in {\mathcal S}$ are such that  $p ^{\perp} \cap S \in M_{\mathcal X}(S)$,
then we denote ${\overline X}_p:= p^{\perp} \cap S$ and the maximal singular
subspace containing it $X_p$. If $p, q \in {\mathcal P}$ form a polar pair in $\Gamma$, the unique symplecton containing them will be denoted $\ll p, q \gg$.
\end{ntn}

\begin{lem}
Let $S$ be a symplecton in $\Gamma$.\\
a). If $p \in N_{\mathcal X}(S)$ and $q \in p^{\perp}$ is such that
$q^{\perp} \cap S \setminus {\overline X}_p \not= \emptyset$, then $q \in S \cup N_{\mathcal X}(S)$.\\
b). The set $S \cup N_{\mathcal X}(S)$ is a subspace of $\Gamma$.
\end{lem}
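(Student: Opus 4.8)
The plan is to establish part (a) first, since part (b) will follow from it together with the already-established properties \emph{(P1)--(P5)} of $\Gamma$. For part (a), fix $p\in\nas{X}(S)$, so ${\overline X}_p=p^\perp\cap S$ is a maximal singular subspace of $S$ lying in the class $M_{\mathcal X}(S)$, and let $X_p\in{\mathcal X}(S)$ be the maximal singular subspace of $\Gamma$ with $X_p\cap S={\overline X}_p$. Take $q\in p^\perp$ with $q^\perp\cap S\not\subseteq{\overline X}_p$; pick a point $r\in (q^\perp\cap S)\setminus{\overline X}_p$. The goal is to show $q\in S$ or $q^\perp\cap S\in M_{\mathcal X}(S)$. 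The first step is to dispose of the easy case: if $q\in S$ we are done, so assume $q\notin S$. Then by \emph{(P5)} the set $q^\perp\cap S$ is empty, a point, a line, or a maximal singular subspace of $S$; since it contains $r$ and (one should check) also meets ${\overline X}_p$, it is nonempty and the question is to pin down its type and, when it is a maximal singular subspace, its class.

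The key geometric input is the interaction between $p$, $q$ and the plane structure inside $S$. Since $r\notin{\overline X}_p=p^\perp\cap S$, the points $p$ and $r$ are at distance $2$; because $r\in S$ and ${\overline X}_p$ is a maximal singular subspace of the polar space $S$, the set $r^\perp\cap{\overline X}_p$ is a hyperplane of ${\overline X}_p$ (a plane, since ${\overline X}_p$ has rank $3$), and every point of $r^\perp\cap{\overline X}_p$ is collinear with both $p$ and $r$, so $\{p,r\}$ is a polar pair and $T:=\ll p,r\gg$ is a symplecton. Now $q\in p^\perp\cap r^\perp$, so $q\in T$, and inside $T$ one analyses $q^\perp\cap S\cap T$ together with the plane $p^\perp\cap T$. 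The plan is to use \emph{(P2)}/\emph{(P3)} and the structure of $D_{4,1}$ polar spaces to show that $q^\perp\cap S$ is forced to be a maximal singular subspace $\overline X_q$ of $S$, and then to identify its class: since $q$ is collinear with $p$, the maximal singular subspaces $X_p$ and $X_q$ (containing $\overline X_p$ and $\overline X_q$) share the line or plane through the common points near $p$; by \emph{(P1)}, two maximal singular subspaces in the \emph{same} class meet in at most a point while those in different classes can meet in a plane. Careful bookkeeping of which intersection actually occurs — this is where the argument is most delicate — will force $X_q\in{\mathcal X}$, hence $q^\perp\cap S\in M_{\mathcal X}(S)$, i.e.\ $q\in\nas{X}(S)$.

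For part (b), let $L\in{\mathcal L}$ be a line meeting $S\cup\nas{X}(S)$ in at least two points $x,y$; we must show $L\subseteq S\cup\nas{X}(S)$. If both $x,y\in S$ then $L\subseteq S$ since $S$ is a subspace (indeed $2$-convex). If $x\in S$ and $y\in\nas{X}(S)$, then $x\in y^\perp\cap S={\overline X}_y$; for any third point $z\in L$ we have $z\in x^\perp$, and applying part (a) with $p=y$, $q=z$ — after checking that $z^\perp\cap S$ is not contained in ${\overline X}_y$, which holds because $z$ is collinear with $x\in{\overline X}_y$ but $L\not\subseteq X_y$ forces extra collinearities, or else $z\in S$ directly — gives $z\in S\cup\nas{X}(S)$. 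If $x,y\in\nas{X}(S)$, pick any $z\in L\setminus\{x,y\}$; again $z\in x^\perp$ and one shows $z^\perp\cap S\not\subseteq{\overline X}_x$ (using that $y\in z^\perp$ contributes collinearities outside ${\overline X}_x$, since ${\overline X}_x\ne{\overline X}_y$ in general, or handling the coincidence separately), so part (a) applies with $p=x$, $q=z$. Thus every point of $L$ lies in $S\cup\nas{X}(S)$, proving it is a subspace.

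\textbf{Main obstacle.} The crux is the class-identification step in part (a): showing that the maximal singular subspace $q^\perp\cap S$ lands in $M_{\mathcal X}(S)$ rather than the other class. This requires a precise analysis inside the symplecton $T=\ll p,r\gg$ of type $D_{4,1}$ of how the planes $p^\perp\cap T$ and $S\cap T$ sit, combined with \emph{(P1)} to translate a plane-intersection of maximal singular subspaces into a same-class statement; a second subtlety is verifying in part (b) the hypothesis ``$z^\perp\cap S\not\subseteq{\overline X}$'' of part (a), which can fail on a codimension-one locus and needs a separate short argument using that $L$ is not contained in any single maximal singular subspace of the relevant class.
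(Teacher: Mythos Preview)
Your setup for part (a) matches the paper's: form the symplecton $T=\ll p,r\gg$ containing $q$, and analyse inside it. But your class-identification step is off. You invoke \emph{(P1)} on the ambient maximal singular subspaces $X_p,X_q$ and assert that ``same class meet in at most a point'' --- this misquotes \emph{(P1)} (same-class members can meet in a line), and in any case \emph{(P1)} does not decide the class here. The paper works instead inside $S$ with \emph{(P3)}: it shows $q^\perp\cap r^\perp\cap\overline X_p$ is a line $L$, so $q^\perp\cap S$ contains the plane $\langle r,L\rangle$ and hence, by \emph{(P5)}, equals a maximal singular subspace $\overline M$ of $S$; then $\langle qr,\overline M\cap\overline X_p\rangle$ is a rank-$3$ singular subspace of $T$, forcing $\overline M\cap\overline X_p=L$, a line, which by \emph{(P3)} puts $\overline M$ in $M_{\mathcal X}(S)$. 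Your sketch never isolates this rank bound, and it is exactly what pins down the class.

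For part (b) your plan --- pick $z$ on the line and apply (a) --- has a genuine gap. To invoke (a) with $p=x$ and $q=z$ you need a point of $z^\perp\cap S$ outside $\overline X_x$, and your justification ``$y\in z^\perp$ contributes collinearities outside $\overline X_x$'' does not hold: $y\notin S$, so $y\in z^\perp$ says nothing directly about $z^\perp\cap S$. Concretely, when $\overline X_x\cap\overline X_y=L$ is a line, the gamma-space property gives only $L\subseteq z^\perp\cap S$; any $w\in\overline X_y\setminus L$ satisfies $w^\perp\cap xy=\{y\}$, so $w\notin z^\perp$, and you are stuck with the possibility $z^\perp\cap S=L$. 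The paper does \emph{not} reduce (b) to (a); it argues directly by cases on $\overline X_p\cap\overline X_q$: if they coincide then $pq\subset X_p$ and one is done; if they meet in a line $L$, one chooses $w\in\overline X_q\setminus z^\perp$, builds $R=\ll w,p\gg$, and shows $z^\perp\cap\overline N$ is a plane (where $\overline N=\langle w,w^\perp\cap\overline X_p\rangle$), forcing $z^\perp\cap S$ to be maximal singular of the correct class; and the disjoint case is ruled out by a short symplecton argument. Your proposal omits both the direct construction in the line case and the exclusion of the disjoint case.
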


\begin{proof} a). As the statement is obviously true when $q \in S$, one may assume that $q \not\in S$. Let $r \in q^{\perp} \cap S \setminus {\overline X}_p$. Set $R= \ll r,p \gg$ and note that $q \in R$. Hence $L:=q^{\perp} \cap r^{\perp} \cap \overline X_p$ is a line and $q^{\perp} \cap S$ contains the plane $\langle r, L \rangle $. According to $\it{(P5)}$, $q^{\perp} \cap S = {\overline M}$ is a maximal singular subspace of $S$. Now $\langle qr, {\overline M}\cap {\overline X}_p\rangle$ has rank $3$ and lies in $R$, thus it is a maximal singular subspace of $R$. It follows that ${\overline M}\cap {\overline X}_p=L$ and according to $\it{(P3)}$, ${\overline M}$ and ${\overline X}_p$ belong to the same class. Hence $q \in N_{\mathcal X}(S)$.

\medskip
b). One has to show that if $p$ and $q$ are two collinear points in $S \cup N_{\mathcal X}(S)$, then the line $pq$ lies entirely in $S \cup N_{\mathcal X}(S)$. If at least one of the points $p$ or $q$ is in $S$ the conclusion follows at
once. Assume $p, q \not\in S$. Set ${\overline X}_p = p^{\perp} \cap S$ and ${\overline X}_q = q^{\perp} \cap S$, two
maximal singular subspaces of $S$ belonging to the same class. According to ${\it (P3)}$
there are three cases to consider:

\smallskip
$(i)$. ${\overline X}_p = {\overline X}_q$ in which case $X_p = X_q$ and
thus $pq \subset N_{\mathcal X}(S)$.

\smallskip
$(ii)$. ${\overline X}_p \cap {\overline X}_q = L$ is a line in $S$.
Let $r \in pq \setminus \lbrace p, q \rbrace$. Clearly $L \subset r^{\perp}$. Let $w \in {\overline X}_q \setminus r^{\perp}$. Hence ${\overline N}= \langle w, w^{\perp} \cap {\overline X}_p \rangle$ is a maximal singular subspace of $S$, not in the
same class with ${\overline X}_p$ and ${\overline X}_q$ since ${\overline N} \cap {\overline X}_p$ and ${\overline N} \cap {\overline
X}_q$ are both planes in $S$. Set $R =  \ll w,p \gg$ and observe that $\overline N \subseteq R \cap S$. In $R$, the set $r^{\perp} \cap {\overline N}$ is a plane and by ${\it(P5)}$, $r ^{\perp}\cap S = {\overline X}_r$ is a maximal singular subspace of $S$. It remains to show that ${\overline X}_r \in M_{\mathcal X}(S)$. It suffices to prove: ${\overline X}_p \cap {\overline X}_r =L$. Now ${\overline X}_r$ meets ${\overline N}$ at the plane $r^{\perp} \cap {\overline N}$. So, the family of ${\overline X}_r$ is not the same as that of ${\overline N}$, which in its turn is not the same as that of ${\overline X}_p$ and ${\overline X}_q$. Hence ${\overline X}_r$ and ${\overline X}_p$, being different, meet at a line, which must be $L$.

\smallskip
$(iii)$. The singular subspaces ${\overline X}_p$ and ${\overline X}_q$ are disjoint. Let $t \in {\overline X}_p$ and set $T =  \ll t, q \gg$. Then the plane $t^{\perp} \cap {\overline X}_q$ lies in $T$ and therefore $p^{\perp} \cap t^{\perp} \cap {\overline X}_q$ is a line. But this contradicts the assumption that ${\overline X}_p \cap {\overline X}_q = \emptyset$. Thus
either ${\overline X}_p \cap {\overline X}_q \not= \emptyset$ or $p$ is not collinear with $q$.
\end{proof}

\begin{lem} Let $p \in G_{\mathcal X}(S)$ with $\lbrace q \rbrace = p^{\perp} \cap S$.\\
a). For any $X \in {\mathcal X}_q \cap {\mathcal X}(S)$ the set $p^{\perp} \cap X$ is a plane.\\
b). Let $r \in N_{\mathcal X}(S)$ be such that $q \in r^{\perp} \cap S$. Then either $r \in p^{\perp}$ or $\lbrace p, r \rbrace$ is a polar pair.
\end{lem}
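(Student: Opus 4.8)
\emph{The plan is} to derive b) directly from a), and to prove a) by passing to the residue $R_q:=\res{\Gamma}{q}$, which by the local hypothesis is a space of type $A_{7,4}$; inside $R_q$ one may invoke the properties \emph{(G1)--(G4)} of Section~2.5.1, and one uses that $A_{7,4}$ is a strong parapolar space. Write $\widehat{\,\cdot\,}$ for passage to $R_q$: a line of $\Gamma$ through $q$ becomes a point $\widehat{\ell}$, a singular subspace $Y\ni q$ becomes its residue $\widehat{Y}$, and a symplecton of $\Gamma$ through $q$ becomes a symplecton of $R_q$. In particular $\widehat{p}$ is the point $pq$, and $\widehat{X},\widehat{A}$ are the maximal singular subspaces of $R_q$ coming from $X$ and from a member $A\in\mathcal{X}_q\cap\mathcal{X}(S)$ with $p^{\perp}\cap A$ a plane (such $A$ exists because $p\in G_{\mathcal{X}}(S)$). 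Since $p\notin X$ — otherwise $p^{\perp}\cap S\supseteq X\cap S$, a maximal singular subspace of $S$, contradicting $p^{\perp}\cap S=\{q\}$ — we have $\widehat{p}\notin\widehat{X}$; moreover $p^{\perp}\cap X$ is a singular subspace of $X$ through $q$, hence $\{q\}$ or a plane by \emph{(P4)}, and the lines of $\Gamma$ through $q$ inside it are exactly the points of $\widehat{p}^{\perp}\cap\widehat{X}$. Thus a) is equivalent to: $\widehat{p}^{\perp}\cap\widehat{X}\neq\emptyset$ in $R_q$ for every such $X$ (and then this set is automatically a line, by \emph{(G4)}).

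To establish this I would fix $X\neq A$. By \emph{(P3)} the maximal singular subspaces $X\cap S$ and $A\cap S$ of $S$, being of the same class $M_{\mathcal{X}}(S)$ and both containing $q$, meet in a line $L\ni q$ with $L\subseteq S$; by \emph{(P1)}, $X\cap A$ is a point or a line, so $X\cap A=L$. In $R_q$: $\widehat{A}\cap\widehat{X}=\{\widehat{L}\}$, a point, by \emph{(G1)}. Also $L\not\subseteq p^{\perp}\cap A$ (since $L\subseteq S$ forces $(p^{\perp}\cap A)\cap L\subseteq p^{\perp}\cap S=\{q\}$), so $\widehat{L}\notin\widehat{p}^{\perp}\cap\widehat{A}=:\widehat{\pi}$, which is a line; and $\widehat{p}$ is non-collinear with $\widehat{L}$ in $R_q$, because $L\subseteq S$. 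Now take any point $\widehat{m}$ of $\widehat{\pi}$: then $\widehat{m}\neq\widehat{L}$, so $\widehat{m}$ is collinear with $\widehat{L}$ (both lie in the singular space $\widehat{A}$) and with $\widehat{p}$, while $\widehat{m}\notin\widehat{X}$ (else $\widehat{m}\in\widehat{A}\cap\widehat{X}=\{\widehat{L}\}$). By \emph{(G4)}, $\widehat{m}^{\perp}\cap\widehat{X}$ is a line $\widehat{\ell}$ (it contains $\widehat{L}$). Since $\widehat{p},\widehat{L}$ are non-collinear but both collinear with $\widehat{m}$, the pair $\{\widehat{p},\widehat{L}\}$ is at distance $2$ and hence polar ($R_q$ being strong parapolar); set $\widehat{R}_0=\ll\widehat{p},\widehat{L}\gg$, a symplecton of $R_q$ (a rank-$3$ polar space). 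Then $\widehat{m}\in\widehat{R}_0$, for otherwise $\widehat{m}^{\perp}\cap\widehat{R}_0$, containing the non-collinear pair $\{\widehat{p},\widehat{L}\}$, would be non-singular, against \emph{(G2)}. Hence $\widehat{\ell}=\widehat{m}\,\widehat{L}\subseteq\widehat{R}_0$, so $\widehat{R}_0\cap\widehat{X}$ contains the line $\widehat{\ell}$ and is therefore a plane $\widehat{\rho}$, by \emph{(G3)}. Finally $\widehat{p}\notin\widehat{\rho}\subseteq\widehat{X}$ and $\widehat{\rho}$ is a maximal singular subspace of the polar space $\widehat{R}_0$, so $\widehat{p}^{\perp}\cap\widehat{\rho}$ is a proper hyperplane of $\widehat{\rho}$, i.e. a line; this line lies in $\widehat{X}$, so $\widehat{p}^{\perp}\cap\widehat{X}\neq\emptyset$, as needed.

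For b): with $\overline{X}_r=r^{\perp}\cap S\in M_{\mathcal{X}}(S)$ and $X_r$ the maximal singular subspace of $\Gamma$ containing it (Notation~4.3), \emph{(P2)} gives $X_r\cap S=\overline{X}_r$, so $X_r\in\mathcal{X}(S)$, and $q\in\overline{X}_r\subseteq X_r$, so $X_r\in\mathcal{X}_q\cap\mathcal{X}(S)$. Moreover $r\in X_r$: otherwise \emph{(P4)} would force $r^{\perp}\cap X_r$ to be at most a plane, whereas it contains the maximal singular subspace $\overline{X}_r$ of $S$. Now part a) gives that $p^{\perp}\cap X_r$ is a plane $\pi$; since $p$ and $r$ (the latter lying in the singular space $X_r$) are each collinear with every point of $\pi$, we get $\pi\subseteq p^{\perp}\cap r^{\perp}$. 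Hence if $r\notin p^{\perp}$ then $d(p,r)=2$ (they share the neighbour $q\in\pi$) and $|p^{\perp}\cap r^{\perp}|\geq|\pi|>1$, i.e. $\{p,r\}$ is a polar pair.

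\emph{The main obstacle} is a): making the residue translation precise and arranging the chain $\widehat{\pi}\to\widehat{m}\to\widehat{\ell}\to\widehat{R}_0\to\widehat{\rho}$ so that single applications of \emph{(G1)--(G4)} and of strong parapolarity in $R_q$ produce a line of $\widehat{X}$ collinear with $\widehat{p}$; once a) is in hand, b) is a short deduction from \emph{(P2)} and \emph{(P4)}.
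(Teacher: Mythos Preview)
Your overall strategy for a) --- pass to the residue $R_q$, which is an $A_{7,4}$ space, and argue there with (G1)--(G4) --- is exactly what the paper means by ``a consequence of the local structure'' (it cites \cite[Lemma~5.1]{on1}), and your derivation of b) from a) is correct and matches the paper's one-line ``follows directly from part a)''.

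However, your proof of a) has a genuine error at the step ``Hence $\widehat{\ell}=\widehat{m}\,\widehat{L}\subseteq\widehat{R}_0$''. You defined $\widehat{\ell}=\widehat{m}^{\perp}\cap\widehat{X}$, a line lying in $\widehat{X}$, while you already established $\widehat{m}\notin\widehat{X}$; hence $\widehat{m}\notin\widehat{\ell}$ and the two lines $\widehat{\ell}$ and $\widehat{m}\,\widehat{L}$ are distinct (the latter lies in $\widehat{A}$, not in $\widehat{X}$). Consequently you have no argument that $\widehat{\ell}\subseteq\widehat{R}_0$, and therefore no reason why $\widehat{R}_0\cap\widehat{X}$ should be a plane rather than the single point $\{\widehat{L}\}$; the final hyperplane step then collapses. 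What you \emph{do} get for free is that $\widehat{R}_0\cap\widehat{A}=\langle\widehat{\pi},\widehat{L}\rangle$ is a plane of $\widehat{R}_0$ (by 2-convexity, since $\widehat{\pi}\subseteq\widehat{p}^{\perp}\cap\widehat{L}^{\perp}$), but that does not transfer to $\widehat{X}$.

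Notice also that your argument never invokes the second clause in the definition of $G_{\mathcal X}(S)$ --- that for every $r\in q^{\perp}\cap S$ the pair $\{p,r\}$ is polar. In the residue this says precisely that $\widehat{p}$ is at distance $2$ from \emph{every} point of the symplecton $\widehat{S}$ (and $\widehat{p}^{\perp}\cap\widehat{S}=\emptyset$). That hypothesis is what forces, for each maximal singular subspace $\widehat{X}$ meeting $\widehat{S}$ in a plane of the correct class, the set $\widehat{p}^{\perp}\cap\widehat{X}$ to be nonempty; the argument in \cite{on1} to which the paper defers uses it essentially (compare the use of \cite[Lemma~2.3]{on1} later in the paper for the analogous residue computation). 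To repair a), bring $\widehat{S}$ into play: pick $\widehat{t}\in\widehat{\overline X}\setminus\{\widehat{L}\}$, form the symplecton $\ll\widehat{p},\widehat{t}\gg$, and analyse its intersection with $\widehat{X}$; or, equivalently, quote the corresponding lemma from \cite{on1} once you have set up the residue translation.
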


\begin{proof}
a). The result is a consequence of the local structure. For a proof see \cite[Lemma 5.1]{on1}.\\
b). Follows directly from part a) of the lemma.
\end{proof}

In what follows, for ease of exposition, we shall let ${\mathcal X}={\mathcal A}$ and show that $D_{\mathcal A}(S)$ is a space of type $D_{6,6}$. Afterwards, we can replace ${\mathcal A}$ with ${\mathcal B}$ and repeat the arguments to obtain that $D_{\mathcal B}(S)$ is also a space of type $D_{6,6}$.

\begin{lem} Let $R, S \in {\mathcal S}$. If $R \cap S= \overline B$ is a maximal
singular subspace in $M_{\mathcal B}(S) \cap M_{\mathcal B}(R)$ then $D_{\mathcal A}(S) = D_{\mathcal A}(R)$.
\end{lem}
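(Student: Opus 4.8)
The plan is to show the two $D_{6,6}$-subspaces coincide as point sets by establishing a symmetric ``exchange'' relation: $R$ sits inside $D_{\mathcal A}(S)$ and $S$ sits inside $D_{\mathcal A}(R)$, and moreover each of the three constituent pieces $S$, $N_{\mathcal A}(S)$, $G_{\mathcal A}(S)$ of $D_{\mathcal A}(S)$ is contained in $D_{\mathcal A}(R)$ (and vice versa). Since $R\cap S=\overline B$ is a maximal singular subspace of $S$ of the $\mathcal B$-type inside $S$ and of the $\mathcal B$-type inside $R$, the $\mathcal A$-classes of maximal singular subspaces of $R$ and of $S$ are ``compatibly glued'' along $\overline B$; the first thing I would record is that for a point $p$ lying in $S$ with $p\notin\overline B$, the set $p^{\perp}\cap R$ is a maximal singular subspace of $R$ lying in $M_{\mathcal A}(R)$ — this uses $(P5)$ to see $p^{\perp}\cap R$ is a maximal singular subspace (it contains $p^{\perp}\cap\overline B$, a plane, since $p\in S$ and $\overline B$ is a maximal singular subspace of $S$), and uses $(P3)$ applied inside $R$ together with the hypothesis on the class of $\overline B$ to pin down the class. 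This already shows $S\setminus\overline B\subseteq N_{\mathcal A}(R)$, hence $S\subseteq D_{\mathcal A}(R)$, and symmetrically $R\subseteq D_{\mathcal A}(S)$.

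Next I would handle the $N$-part. Let $p\in N_{\mathcal A}(S)$, so $\overline X_p=p^{\perp}\cap S$ is a maximal singular subspace of $S$ in the $\mathcal A$-class. I want to show $p\in D_{\mathcal A}(R)$. Consider $\overline X_p\cap\overline B$: by $(P3)$, since $\overline X_p$ is $\mathcal A$-type and $\overline B$ is $\mathcal B$-type in $S$, this intersection is a point or a plane. If it is a plane $\pi$, then $\pi\subseteq R$ and $p^{\perp}$ contains $\pi$, so $p^{\perp}\cap R$ contains a plane and by $(P5)$ is a maximal singular subspace of $R$; an application of $(P3)$ inside $R$, comparing with the already-located $\mathcal A$-subspaces of $R$ coming from the previous paragraph, forces it into $M_{\mathcal A}(R)$, whence $p\in N_{\mathcal A}(R)$ (or $p\in R$). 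If $\overline X_p\cap\overline B$ is a single point $q$, then $q=p^{\perp}\cap R$ is forced (a second point of $p^{\perp}\cap R$ would, via the symplecton through it and $p$, produce a line in $p^{\perp}\cap S$ through $q$ not in... — one argues as in Lemma 4.4(iii) that $p^{\perp}\cap R$ cannot be larger); then I must check the two defining conditions of $G_{\mathcal A}(R)$: that for every $r\in q^{\perp}\cap R$ the pair $\{p,r\}$ is polar, and that $p^{\perp}\cap A$ is a plane for some $A\in\mathcal A_q\cap\mathcal A(R)$. The polarity condition: given $r\in q^{\perp}\cap R$, if $r\in S$ this is standard; if $r\notin S$, apply $(WHA)$ to a suitable isometrically embedded $6$-circuit through $p$, $q$, $r$ and points of $\overline X_p$, exactly in the style the axiom is used elsewhere, to produce a common perp point. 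The plane condition: $\overline X_p$ itself is a maximal singular subspace of $S$ through $q$ in class $\mathcal A$, and the maximal singular subspace $X_p$ of $\Gamma$ containing it meets... — I would use Lemma 4.5(a) together with the fact that $\overline X_p\cap\overline B=\{q\}$ to locate an $A\in\mathcal A_q\cap\mathcal A(R)$ with $p^{\perp}\cap A$ a plane. This shows $N_{\mathcal A}(S)\subseteq D_{\mathcal A}(R)$.

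For the $G$-part: let $p\in G_{\mathcal A}(S)$ with $p^{\perp}\cap S=\{q\}$. If $q\in\overline B$ then $q\in R$, so $p^{\perp}\cap R$ is nonempty; one shows as before it is exactly $\{q\}$ (using that $p^{\perp}\cap S=\{q\}$ and $\overline B\subseteq S$), and then transfers the two $G$-conditions from ``relative to $S$'' to ``relative to $R$'' using Lemma 4.5 and $(WHA)$, much as in the previous paragraph; the crucial point is that $\mathcal A_q\cap\mathcal A(S)$ and $\mathcal A_q\cap\mathcal A(R)$ are linked through the geometry of $\overline B$. If $q\notin\overline B$ but $q\in S$, then $q$ is collinear to some (in fact every) point of $\overline B$; pick $t\in\overline B$ with $q\in t^{\perp}$ — then inside the symplecton $\ll p,t\gg$ (note $\{p,t\}$ is polar since $t\in q^{\perp}\cap S$ and $p\in G_{\mathcal A}(S)$) one finds a point of $R\setminus\{q\}$... actually here I would instead argue that $p\in N_{\mathcal A}(R)$ by showing $p^{\perp}\cap R$ is a plane: $p^{\perp}$ contains $q$ and, through the symplecta $\ll p,t\gg$ for $t$ ranging over $\overline B$, enough points of $R$ to fill a maximal singular subspace, whose class is then fixed by $(P3)$. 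Combining the three inclusions gives $D_{\mathcal A}(S)\subseteq D_{\mathcal A}(R)$, and by the symmetry of the hypothesis in $R$ and $S$, equality.

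\textbf{Main obstacle.} The hardest step is the handling of the $G$-part when $q\notin\overline B$: one must show that a point $p$ which relates to $S$ in the ``$G$-way'' (single perp point, polarity condition, one plane in an $\mathcal A$-maximal) relates to $R$ either in the $G$-way or in the $N$-way, and correctly identify which — this is where the asymmetry between $N$ and $G$ is most delicate and where the Weak Hexagon Axiom must be invoked carefully to verify the polarity clause for points $r\in q^{\perp}\cap R$ that lie outside $S$. A secondary subtlety is keeping the bookkeeping of classes straight: every time $(P3)$ is applied inside $R$ rather than inside $S$, one must make sure the ``$\mathcal A$'' there is the global $\mathcal A$ and not accidentally the class playing the role of $\mathcal B$ in $R$ — this is exactly where the hypothesis $\overline B\in M_{\mathcal B}(S)\cap M_{\mathcal B}(R)$ does its work and must be cited each time.
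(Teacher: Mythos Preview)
Your overall architecture---handle $S$, then $N_{\mathcal A}(S)$, then $G_{\mathcal A}(S)$, then invoke symmetry---is exactly the paper's. The treatment of $S\setminus\overline B$ is fine. But several of the case conclusions are wrong, and one of them is fatal.

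\medskip
\textbf{The $N$-part, $\overline X_p\cap\overline B=\{q\}$.} It is \emph{not} forced that $p^{\perp}\cap R=\{q\}$. Both subcases occur. If there is $r\in p^{\perp}\cap R\setminus\{q\}$, one shows $r\notin A_p$ (otherwise $r^{\perp}\cap S$ would strictly contain $\overline A_p$), chooses $t\in\overline A_p\setminus r^{\perp}$ (so $t\in N_{\mathcal A}(R)$ by the first step), and applies Lemma~4.4(a) to the pair $\{t,p\}$ with symplecton $R$ to get $p\in N_{\mathcal A}(R)$. Only in the remaining subcase $p^{\perp}\cap R=\{q\}$ does one land in $G_{\mathcal A}(R)$. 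Your appeal to ``Lemma~4.4(iii)'' to rule out the first subcase does not work: that argument concerns two collinear points of $N_{\mathcal A}(S)$ with disjoint traces, which is a different configuration.

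\medskip
\textbf{The $G$-part, $q\notin\overline B$.} Your proposed conclusion $p\in N_{\mathcal A}(R)$ is impossible. If $p\in N_{\mathcal A}(R)$ then $p^{\perp}\cap R$ is a maximal singular subspace of $R$, hence meets $\overline B$ nontrivially; but $p^{\perp}\cap S=\{q\}$ with $q\notin\overline B$ forces $p^{\perp}\cap\overline B=\emptyset$, a contradiction. The correct target is $p\in G_{\mathcal A}(R)$: one locates the unique point $r\in p^{\perp}\cap R$ inside $p^{\perp}\cap A_q$ (where $A_q\in\mathcal A_q\cap\mathcal A(S)\cap\mathcal A(R)$ extends $q^{\perp}\cap R$), shows $p^{\perp}\cap R=\{r\}$ by the contradiction just described, and verifies the polarity clause for $t\in r^{\perp}\cap R$ by passing to an auxiliary symplecton $T=\ll q,t\gg$ with $T\cap S\in M_{\mathcal B}(S)$ and invoking the already-proved case $q\in\overline B$. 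Similarly, in the case $q\in\overline B$ you cannot assume $p^{\perp}\cap R=\{q\}$: if $p^{\perp}\cap R$ contains a line, a residue computation in $\res{\Gamma}{q}$ forces $p^{\perp}\cap R$ to be maximal singular in $R$, placing $p$ in $N_{\mathcal A}(R)$.

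\medskip
\textbf{On \textit{(WHA)}.} This lemma does not need the Weak Hexagon Axiom at all; the polarity checks are handled by $(P3)$, $(P4)$, Lemma~4.4(a), and Lemma~4.5. Your invocation of \textit{(WHA)} is a red flag that the local structure is not being exploited fully. In the paper, \textit{(WHA)} enters only later, in Proposition~4.9.
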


\begin{proof} It suffices to prove $D_{\mathcal A}(S) \subseteq D_{\mathcal A}(R)$.

\medskip
$(i)$. Let $p \in S \setminus {\overline  B}$. Then $p ^{\perp} \cap \overline B$ is a plane and, by {\it (P5)}, it follows that $p^{\perp} \cap R = \overline A_p$ is a maximal singular subspace of $R$ in $M_{\mathcal A}(R)$ (since it has a plane in common with $\overline B$). Thus $p \in N_{\mathcal A}(R) \subset D_{\mathcal A}(R)$ and $S \subset D_{\mathcal A}(R)$.

\medskip
$(ii)$. Let now $p \in N_{\mathcal A}(S)$. If $p \in R$ we are done so we may assume that $p \not \in R$. Then $\overline A_p = p^{\perp} \cap S$ can have either a plane or a point in common with $\overline B$. If $\overline A_p \cap \overline B$ is a plane, then $p \in N_{\mathcal A}(R)$ by a similar argument to that used in $(i)$. Next let $\overline A_p \cap \overline B = \lbrace q \rbrace$ be a single point.
If there exists a point $r \in p^{\perp} \cap R \setminus \lbrace q \rbrace$ then $r \not \in A_p$. Assume to the contrary that $r \in A_p$. Then $r ^{\perp} \cap S \supseteq \langle {\overline A}_p, r^{\perp} \cap {\overline B} \rangle \supset {\overline A}_p$ and since ${\overline A}_p$ is a maximal singular subspace of $S$ we obtain a contradiction with {\it (P5)}. Thus $ r \not \in A_p$. Let $t \in {\overline A}_p \setminus r^{\perp}$, which, according to Step $(i)$, is in $N_{\mathcal A}(R)$. An application of Lemma $4.4(a)$, to the pair $\lbrace p, t \rbrace$ and symplecton $R$, gives that $p \in N_{\mathcal A}(S)$. Next assume that $p^{\perp} \cap R = \lbrace q \rbrace$, a single point. {\it Claim}: $p \in G_{\mathcal A}(R)$. Let $s \in q^{\perp} \cap R \setminus S$ so $s \in R \subset N_{\mathcal A}(S)$ and the pair $\lbrace p, s \rbrace$ is polar. Let ${\overline A}_s = s^{\perp} \cap S$ with $A_s$ be the maximal singular subspace containing ${\overline A}_s$. Since $p^{\perp} \cap A_s$ contains the line ${\overline A}_p \cap {\overline A}_s$, it follows, by {\it (P4)}, that $p^{\perp} \cap A_s$ is a plane. This concludes the proof of the claim.

\medskip
$(iii)$. Let $p \in G_{\mathcal A}(S)$ be such that $p^{\perp} \cap {\overline B} = \lbrace q \rbrace$ is a point. Assume first that $p^{\perp} \cap R = \lbrace q \rbrace$. Let $r \in q^{\perp} \cap R$, so $r \in N_{\mathcal A}(S)$. By Lemma $4.5(b)$ the pair $\lbrace p, r  \rbrace$ is polar. Further, if $A \in {\mathcal A}(S) \cap {\mathcal A}(R) \cap {\mathcal A}_q$ then $p^{\perp} \cap A$ is a plane. Thus $p \in G_{\mathcal A}(R)$. Assume now $p^{\perp} \cap R \supseteq qs$, a line. Consider ${\rm Res}_{\Gamma}(q)$. For a subspace  $F$ of $\Gamma$ which contains $q$, ${\widetilde F}$ will denote the corresponding subspace in $\res{\Gamma}{q}$. In $\res{\Gamma}{q}$, ${\widetilde R}$ and ${\widetilde S}$ are two ``symplecta" which meet at a plane of $\widetilde {\mathcal B}$-type. Also $\tilde p$ is a ``point" at distance two from every single ``point" in $\widetilde S$ and $\tilde s \in \tilde p ^{\tilde \perp} \cap \widetilde R$. According to the result of \cite[Lemma 2.3]{on1}, it follows that $\tilde p^{\tilde \perp} \cap \widetilde R$ is a ``plane". Then, back in $\Gamma$, $p^{\perp} \cap R$ is a maximal singular subspace of $R$. Now $(p^{\perp} \cap R) \cap \overline B = \lbrace q \rbrace$ which implies $p^{\perp} \cap R \in M_{\mathcal A}(R)$. Thus $p \in N_{\mathcal A}(R)$.

\medskip
$(iv)$. Let now $p \in G_{\mathcal A}(S)$ be such that $p^{\perp} \cap {\overline B} = \emptyset$ and let $\lbrace q \rbrace = p^{\perp} \cap S$. Recall from Step $(i)$ that $q\in N_{\mathcal A}(R)$. Let $A_q \in {\mathcal A}(R) \cap {\mathcal A}_q$ be the maximal singular subspace which contains $\langle q, q^{\perp} \cap R \rangle$. Note that $A_q \in {\mathcal A}(S)$ also, since $A_q \cap S = \langle q, q^{\perp} \cap {\overline B}\rangle $. By Lemma $4.5(a), \; p^{\perp} \cap A_q$ is a plane which intersects $R$ nontrivially. Note that $p^{\perp} \cap A_q \cap R$ cannot contain a line, since this would imply $p^{\perp} \cap {\overline B} \not= \emptyset$, contrary to our choice of $p$. Let $\lbrace r \rbrace = p^{\perp} \cap A_q \cap R$. {\it Claim}: $p \in G_{\mathcal A}(R)$. We first prove that $p^{\perp} \cap R = \lbrace r \rbrace$ is a single point. Assume by contradiction that there is a point $s \in p^{\perp} \cap R \setminus \lbrace r \rbrace$. According to the above argument, $s \not \in A_q$. Then, by Lemma $4.4(a)$, applied to the pair $\lbrace p, q \rbrace$ and symplecton $R$ we get $p \in N_{\mathcal A}(R)$. But this implies $p^{\perp} \cap R \cap {\overline B} \not = \emptyset$, which contradicts the fact that $p^{\perp} \cap {\overline B} = \emptyset$. Therefore $p^{\perp} \cap R = \lbrace r \rbrace$. Since $p^{\perp} \cap A_q$ is a plane, it remains to prove that given any point $t \in r^{\perp} \cap R$, the pair $\lbrace p, t \rbrace$ is polar. This is clearly true for any $t \in r^{\perp} \cap {\overline B} = q^{\perp} \cap {\overline B}$, by the definition of $G_{\mathcal A}(S)$. So let us assume that $t \not \in {\overline B}$. Now $t \in N_{\mathcal A}(S)$ and since $q^{\perp} \cap t^{\perp} \cap S$ contains a plane, the pair $\lbrace q, t \rbrace$ is polar. Set $T = \ll q, t \gg$ and note that $T \cap S = \langle q, q^{\perp} \cap t^{\perp} \cap S\rangle  \in M_{\mathcal B}(S)$. Now $p \in G_{\mathcal A}(S)$, $p^{\perp} \cap S \cap T \not = \emptyset$ and $p^{\perp} \cap T \supset qr$ and according to Step $(iii)$, $p \in N_{\mathcal A}(T)$. Therefore $p^{\perp} \cap T \cap t^{\perp}$ contains a plane and this proves that $\lbrace p,t \rbrace$ is a polar pair.
\end{proof}

\begin{lem} a). Let $S, R \in {\mathcal S}$ be such that $S \cap R = L$ is a line. Assume that $R \cap {\mathcal A}(S) \not= \emptyset$. Then $D_{\mathcal A}(S) = D_{\mathcal A}(R)$.\\
b). Let $p \in G_{\mathcal A}(S)$ with $\lbrace q \rbrace = p^{\perp} \cap S$. For $r \in q^{\perp} \cap S$ set $R = \ll p, r \gg$. Then $D_{\mathcal A}(S) = D_{\mathcal A}(R)$.
\end{lem}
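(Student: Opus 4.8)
\medskip
The plan is to argue as in the proof of Lemma $4.6$. For part (a) it is enough to prove $D_{\mathcal A}(S)\subseteq D_{\mathcal A}(R)$ (the reverse inclusion follows by exchanging $R$ and $S$, using that the case $p\in S$ treated below also yields $S\cap{\mathcal A}(R)\neq\emptyset$). Before the case analysis I would fix once and for all a maximal singular subspace $A\in{\mathcal A}(S)$ meeting $R$ and record, from {\it (P1)}--{\it (P5)} and $S\cap R=L$, the mutual positions of $A\cap S$ (a maximal singular subspace of $S$ of class ${\mathcal A}$), of $A\cap R$, and of $A\cap L$; this data matches the class ${\mathcal A}$ for $R$ with the class ${\mathcal A}$ for $S$ along $L$, and is what, in each case below, decides the class of the maximal singular subspaces that occur.

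\medskip
I would then take $p\in D_{\mathcal A}(S)$ and treat the three cases $p\in S$, $p\in N_{\mathcal A}(S)$, $p\in G_{\mathcal A}(S)$. If $p\in S$: either $p\in L$, and then $p\in R$; or $p\in S\setminus L$, and then one identifies $p^{\perp}\cap R$ by {\it (P5)} (passing, if $p^{\perp}\cap R$ is not at once a maximal singular subspace of $R$, to the residue at a point of $p^{\perp}\cap L$) and concludes, by comparison with $A$, that $p\in D_{\mathcal A}(R)$. If $p\in N_{\mathcal A}(S)$, set $\overline A_p=p^{\perp}\cap S$ and distinguish according as $\overline A_p\cap L$ equals $L$, a point, or the empty set, and according to the rank of $p^{\perp}\cap R$: here Lemma $4.4(a)$, applied to a suitable pair of points and the symplecton $R$, moves one from a known point of $N_{\mathcal A}(R)$ to $p$, and when $p^{\perp}\cap R$ reduces to a single point one verifies the defining conditions of $G_{\mathcal A}(R)$ directly. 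This reproduces steps $(ii)$ and $(iii)$ of the proof of Lemma $4.6$.

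\medskip
The case $p\in G_{\mathcal A}(S)$, with $\{q\}=p^{\perp}\cap S$, is the main obstacle. Since $p^{\perp}\cap L\subseteq\{q\}$, I would split on whether $q\in L$ or $q\notin L$, and within each on the rank of $p^{\perp}\cap R$. When $p^{\perp}\cap R$ is a single point I would verify the three defining conditions of $G_{\mathcal A}(R)$, using Lemma $4.5(b)$ for the polarity of the relevant pairs and Lemma $4.5(a)$ for the plane condition. The delicate configuration is the one in which $p^{\perp}\cap R$ threatens to contain a line: there, exactly as in step $(iii)$ of the proof of Lemma $4.6$, I would pass to the point residue $\res{\Gamma}{q}$, in which $\widetilde R$ and $\widetilde S$ become ``symplecta'' of a space of type $A_{7,4}$ and $\tilde p$ a ``point'' for which the condition $p^{\perp}\cap S=\{q\}$ becomes a distance-two statement, and apply \cite[Lemma 2.3]{on1} to conclude that $\tilde p^{\tilde\perp}\cap\widetilde R$ is a ``plane''; back in $\Gamma$ this says that $p^{\perp}\cap R$ is a maximal singular subspace of $R$, whose class is read off from its intersection with $A$, so that $p\in N_{\mathcal A}(R)$. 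Because $R\cap S$ is only a line, the bookkeeping of the relative positions of $p$, $q$, $L$ and $A$ is heavier than in Lemma $4.6$, and I expect this to be the longest and most delicate part of the argument.

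\medskip
For part (b) I would reduce to part (a). With $p\in G_{\mathcal A}(S)$, $\{q\}=p^{\perp}\cap S$, $r\in q^{\perp}\cap S$ and $R=\ll p,r\gg$ (defined, since $\{p,r\}$ is polar by the definition of $G_{\mathcal A}(S)$), the point $q$ lies in $p^{\perp}\cap r^{\perp}$, and since $p$ and $r$ are at distance two it lies on a geodesic from $p$ to $r$; hence $q\in R$ and the line $qr$ lies in $R\cap S$. Because $p^{\perp}\cap S=\{q\}$, the subspace $R\cap S$ can contain neither a plane nor a maximal singular subspace of $R$: such a subspace $\pi$ would satisfy $\pi\not\subseteq p^{\perp}$ (otherwise $\pi\subseteq p^{\perp}\cap S=\{q\}$), so in the polar space $R$ the trace $p^{\perp}\cap\pi$ would be a hyperplane of $\pi$, of rank at least one, whereas $p^{\perp}\cap\pi\subseteq p^{\perp}\cap R\cap S\subseteq\{q\}$. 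Hence $R\cap S=qr$ is exactly a line. Finally, for $A\in{\mathcal A}_q\cap{\mathcal A}(S)$ as in the definition of $G_{\mathcal A}(S)$ we have $q\in A\cap R$, so $R\cap{\mathcal A}(S)\neq\emptyset$, and part (a) gives $D_{\mathcal A}(S)=D_{\mathcal A}(R)$.
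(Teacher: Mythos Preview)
There is a misreading of the hypothesis that undermines your plan for part (a). As the paper's proof makes clear from its first line (``Let $p \in R \cap G_{\mathcal A}(S)$''), and as the applications in part (b) and in Proposition~4.12(ii.a) confirm, the intended assumption is that $R$ contains a point of $G_{\mathcal A}(S)$; the printed ``$R \cap {\mathcal A}(S) \neq \emptyset$'' is evidently a typo. Your reading---fix $A \in \mathcal{A}(S)$ meeting $R$---is vacuous, since any $A \in \mathcal{A}(S)$ containing $L = S\cap R$ already meets $R$ in at least $L$. In particular, from your reading alone you cannot conclude that $A \cap R$ is a maximal singular subspace of $R$, so the step where you ``match the class $\mathcal A$ for $R$ with the class $\mathcal A$ for $S$ along $L$'' is not justified.

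More importantly, the paper does \emph{not} redo the three-case inclusion argument of Lemma~4.6. Its proof of (a) is short: first it uses the point $p \in R \cap G_{\mathcal A}(S)$ (via Lemma~4.5(a)) to show that every $A \in \mathcal{A}(S)$ with $L \subset A$ actually lies in $\mathcal{A}(R)$; then it picks two such $A_1, A_2$, points $p_1 \in (A_1 \cap S)\setminus L$ and $p_2 \in (A_2 \cap R)\setminus L$, sets $T = \ll p_1, p_2 \gg$, observes that $T$ meets each of $S$ and $R$ in a maximal singular subspace of $\mathcal B$-type, and applies Lemma~4.6 twice to obtain $D_{\mathcal A}(S) = D_{\mathcal A}(T) = D_{\mathcal A}(R)$. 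This intermediate-symplecton trick eliminates all of the heavy bookkeeping you anticipated; no residue argument and no appeal to \cite[Lemma~2.3]{on1} is needed here.

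Your reduction of (b) to (a) is essentially the paper's, including the argument that $R\cap S$ is exactly the line $qr$. The only correction is in verifying the hypothesis of (a): once it is read correctly, it is immediate, since $p \in G_{\mathcal A}(S)$ and $p \in R = \ll p, r \gg$ give $R \cap G_{\mathcal A}(S) \neq \emptyset$.
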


\begin{proof}a). Let $S, R \in {\mathcal S}$ be such that $S \cap R = L$ is a line. Let
$p \in R \cap G_{\mathcal A}(S)$. Denote $p ^{\perp} \cap S = \lbrace q
\rbrace$ and observe that $ q \in L$. Let $A \in {\mathcal A}(S)$ be such that $L \subset A$. Then, according to {\it (P2)}, $R \cap A$ can be a line or a maximal singular subspace of $R$. Since $p \in G_{\mathcal A}(S),\; p^{\perp} \cap A $ is a plane. Let $r \in p^{\perp} \cap A \setminus \lbrace q \rbrace$ and let $t \in L \setminus \lbrace q \rbrace$. Thus $R = \ll p, t \gg$ and $r \in R$. Then $A \cap R \supseteq \langle r, L \rangle $ and by {\it (P2)} it follows that $A \in {\mathcal A}(R)$. We proved that if $A \in {\mathcal A}(S)$ is such that $L \subset A$ then $A \in {\mathcal A}(R)$ as well.\\
Let now $A_1, A_2 \in {\mathcal A}(S) \cap {\mathcal A}(R)$ be two distinct maximal singular subspaces such that $L \subseteq A_1 \cap A_2$. The line $L$ is contained in the symplecton $S$ and so is the intersection of two maximal singular subspaces of $S$ of the same class. Let $p_1 \in A_1 \cap S \setminus L$ and $p_2 \in A_2 \cap R \setminus L$. Set $T = \ll p_1, p_2 \gg$. Note that $T \cap S = \langle p_1, p_1^{\perp} \cap A_2 \cap S \rangle $ and $T \cap R = \langle p_2, p_2^{\perp} \cap A_1 \cap R \rangle $ both singular subspaces of ${\mathcal B}$-type. Then, by Lemma $4.6$: $D_{\mathcal A}(S) = D_{\mathcal A}(T) = D_{\mathcal A}(R)$.

\medskip
b). Let $S$ and $R$ be two symplecta as in the hypothesis. We claim that $R \cap S = pq$, a line. If $R \cap S$ contains a plane then $p^{\perp} \cap S$ contains a line, contradicting the properties of $p \in G_{\mathcal A}(S)$. Since $p \in G_{\mathcal A}(S) \cap R$, part a) applies and the result follows.
\end{proof}

\begin{prop} Let $S \in {\mathcal S}$ then $D_{\mathcal A}(S)$ is a subspace of $\Gamma$.
\end{prop}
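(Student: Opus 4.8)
The plan is to verify the subspace condition line by line. A point set is a subspace exactly when every line meeting it in at least two points is contained in it. By Lemma 4.4(b) the set $S\cup N_{\mathcal A}(S)$ is already a subspace, and it lies in $D_{\mathcal A}(S)$; so the only situation left to examine is a line $\ell$ through a point $p\in G_{\mathcal A}(S)$ which meets $D_{\mathcal A}(S)$ in a second point $q$, and one must show $\ell\subseteq D_{\mathcal A}(S)$. I would first replace $S$ by a symplecton that actually contains $p$: put $q_p:=p^{\perp}\cap S$ (a single point, as $p\in G_{\mathcal A}(S)$), choose $r\in q_p^{\perp}\cap S$ with $r\ne q_p$ (possible, $S$ being nondegenerate of rank $\ge 2$), and set $R:=\ll p,r\gg$. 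The pair $\{p,r\}$ is polar by the definition of $G_{\mathcal A}(S)$, so $R\in{\mathcal S}$ and $p\in R$, and Lemma 4.7(b) gives $D_{\mathcal A}(R)=D_{\mathcal A}(S)$. Hence $q\in D_{\mathcal A}(R)$ and it suffices to prove $\ell\subseteq D_{\mathcal A}(R)$.

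Since $q\in D_{\mathcal A}(R)$, it lies in $R$, in $N_{\mathcal A}(R)$, or in $G_{\mathcal A}(R)$, and I would treat these in turn. If $q\in R$, then $R$ being a polar space, hence a subspace, $\ell=pq\subseteq R\subseteq D_{\mathcal A}(R)$. If $q\in N_{\mathcal A}(R)$, write $\overline A:=q^{\perp}\cap R\in M_{\mathcal A}(R)$; since $q\notin R$ the line $\ell$ meets $R$ only in $p$, and $p\in\overline A$ because $p$ is collinear with $q$ and lies in $R$. For any $a\in\overline A$ the set $a^{\perp}$ is a subspace ($\Gamma$ being a gamma space) containing $p$ and $q$, hence it contains the whole line $\ell$; thus $\overline A\subseteq t^{\perp}\cap R$ for every $t\in\ell$, and by {\it (P5)} this forces $t^{\perp}\cap R=\overline A\in M_{\mathcal A}(R)$, i.e. $t\in N_{\mathcal A}(R)$, for every $t\in\ell\setminus\{p\}$. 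Therefore $\ell\subseteq R\cup N_{\mathcal A}(R)\subseteq D_{\mathcal A}(R)$.

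The essential case is $q\in G_{\mathcal A}(R)$. Then $q^{\perp}\cap R$ is a single point, which must be $p$ since $p$ is collinear with $q$ and lies in $R$; so $q^{\perp}\cap R=\{p\}$. Pick $r'\in p^{\perp}\cap R$ with $r'\ne p$; by the definition of $G_{\mathcal A}(R)$ the pair $\{q,r'\}$ is polar, so $R':=\ll q,r'\gg\in{\mathcal S}$ and $q\in R'$. Crucially, $q$ is not collinear with $r'$ (as $r'\in R\setminus\{p\}$ and $q^{\perp}\cap R=\{p\}$), whereas $p$ is collinear with both $q$ and $r'$; hence $p\in R'$ by the $2$-convexity of $R'$. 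So $p,q\in R'$, giving $\ell\subseteq R'$, while Lemma 4.7(b), applied to $q\in G_{\mathcal A}(R)$ with the point $r'\in p^{\perp}\cap R$, gives $D_{\mathcal A}(R')=D_{\mathcal A}(R)$. Therefore $\ell\subseteq R'\subseteq D_{\mathcal A}(R')=D_{\mathcal A}(R)=D_{\mathcal A}(S)$, which finishes the argument. I expect this last case to be the only real obstacle: the point to see is that $q^{\perp}\cap R=\{p\}$ is exactly the hypothesis under which the symplecton $\ll q,r'\gg$ swallows $p$ by $2$-convexity, so that a second use of Lemma 4.7(b) drops the whole line into a single symplecton carrying the correct family $D_{\mathcal A}$. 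A minor additional subtlety, in the $N_{\mathcal A}(R)$ case, is that $t^{\perp}\cap R$ could a priori be a line — which would eject $t$ from $D_{\mathcal A}(R)$ — and the inclusion $\overline A\subseteq t^{\perp}\cap R$ together with {\it (P5)} is precisely what excludes this.
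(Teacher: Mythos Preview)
Your argument is correct. The route differs from the paper's in its organization, though it rests on the same ingredients (Lemma~4.4(b), Lemma~4.7(b), the gamma-space property, and the $2$-convexity of symplecta). The paper runs a four-way case split on the positions of \emph{both} $p$ and $q$ relative to the original symplecton $S$: it handles $p\in N_{\mathcal A}(S),\,q\in G_{\mathcal A}(S)$ by passing through a symplecton sharing a $\mathcal B$-type maximal singular with $S$ (Lemma~4.6), and $p,q\in G_{\mathcal A}(S)$ via Lemma~4.7(b) together with a further subdivision according to whether the feet in $S$ coincide, are collinear, or lie at distance~$2$. Your proof instead makes a single preliminary move: as soon as one endpoint $p$ lies in $G_{\mathcal A}(S)$, you jump via Lemma~4.7(b) to a symplecton $R$ with $p\in R$ and $D_{\mathcal A}(R)=D_{\mathcal A}(S)$, and only then split on where $q$ sits in $D_{\mathcal A}(R)$. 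This collapses the paper's cases (ii)--(iv) into a uniform three-way split and avoids invoking Lemma~4.6 altogether; the trade-off is that in your $q\in G_{\mathcal A}(R)$ sub-case you need a second application of Lemma~4.7(b) and the $2$-convexity observation that $p\in\ll q,r'\gg$, which the paper does not use here. A small remark: your $q\in N_{\mathcal A}(R)$ case is in fact just the special instance of Lemma~4.4(b) with one point in $R$, so you could simply cite that lemma rather than rerun the gamma-space computation.
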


\begin{proof}Let $p, q \in D_{\mathcal A}(S)$ be two collinear points. We have to prove that $pq \subset D_{\mathcal A}(S)$.

\medskip
$(i)$. If $p,q \in S \cup N_{\mathcal A}(S)$ the result follows from Lemma $4.4(b)$.

\medskip
$(ii)$. If $p \in G_{\mathcal A}(S)$ and $ q \in S$ then $\lbrace q \rbrace = p^{\perp} \cap S$. In this case $pq \subset S \cup G_{\mathcal A}(S)$ follows from the gamma space property of $\Gamma$; see Section 2.5.

\medskip
$(iii)$. Assume now that $p \in N_{\mathcal A}(S)$ and $q \in G_{\mathcal A}(S)$.
Let ${\overline A}_p = p^{\perp} \cap S$ and $\lbrace r \rbrace = q^{\perp} \cap S$. It follows, from Lemma $4.4(a)$, that $r \in {\overline A}_p$. Let $t \in r^{\perp} \cap S \setminus {\overline A}_p$. Set $R = \ll p, t \gg$. Then, by Lemma $4.6$, $D_{\mathcal A}(S) = D_{\mathcal A}(R)$. Now $q \in G_{\mathcal A}(S) \subset D_{\mathcal A}(R)$ and $q^{\perp} \cap R$ contains the line $pr$, hence $q \in N_{\mathcal A}(R)$. Thus $p \in R,\; q \in N_{\mathcal A}(R)$ and, according to Lemma $4.4(b), \; pq \subset D_{\mathcal A}(R) = D_{\mathcal A}(S)$.

\medskip
$(iv)$. Let $p, q \in G_{\mathcal A}(S)$. First assume $p^{\perp} \cap
q^{\perp} \cap S = \lbrace r \rbrace$. Let $t \in r^{\perp} \cap S$ and set $R = \ll p, t\gg$. Then, according to Lemma $4.7(b)$, $D_{\mathcal A}(S) = D_{\mathcal A}(R)$. Thus $p \in R$ and $q \in R \cup N_{\mathcal A}(R)$ implies $pq \subset D_{\mathcal A}(R)= D_{\mathcal A}(S)$. Let now $p^{\perp} \cap S = \lbrace r \rbrace$ and $q^{\perp} \cap R = \lbrace t \rbrace$ be such that $r \not = t$. If $r \in t^{\perp}$ then set $R = \ll p, t \gg$ and, by Lemma $4.7(b)$, $D_{\mathcal A}(R) = D_{\mathcal A}(S)$. Therefore $pq \subset R \subset D_{\mathcal A}(S)$. If $r \not \in t^{\perp}$ then take $w \in r^{\perp} \cap t^{\perp}$ and set $T = \ll p, w \gg$. Again, by Lemma $4.7(b)$ $D_{\mathcal A}(T) = D_{\mathcal A}(S)$. So $p \in T,\; q \in D_{\mathcal A}(T)$ and by previous results of this Proposition, $pq \subset D_{\mathcal A}(T) = D_{\mathcal A}(S)$.
\end{proof}

The assumption {\it (WHA)} from the statement of Theorem $4.1$ is used in the proof of the following proposition.

\begin{prop} Let $S \in {\mathcal S}$, then $D_{\mathcal A}(S)$ is a $2$-convex
subspace of $\Gamma$.
\end{prop}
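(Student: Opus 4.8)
The plan is to start from Proposition 4.8, which already gives that $D_{\mathcal A}(S)$ is a subspace, so that only $2$-convexity remains: given $p,q\in D_{\mathcal A}(S)$ with $d(p,q)=2$ and a point $x\in p^{\perp}\cap q^{\perp}$, one must show $x\in D_{\mathcal A}(S)$. I would argue by a case analysis according to which of the three pieces $S$, $N_{\mathcal A}(S)$, $G_{\mathcal A}(S)$ contains $p$ and which contains $q$, and, inside each case, according to the possibilities for $p^{\perp}\cap S$ and $q^{\perp}\cap S$ permitted by \emph{(P1)--(P5)} and by the defining conditions of $N_{\mathcal A}(S)$ and $G_{\mathcal A}(S)$.

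The routine cases are those in which $p$ or $q$ lies in $S$. If $p,q\in S$, then $S$, being a symplecton, is itself $2$-convex and $x\in S\subseteq D_{\mathcal A}(S)$. If, say, $p\in S$ and $q\notin S$, then $p\in x^{\perp}\cap S$, so $x^{\perp}\cap S\ne\emptyset$; when $q\in N_{\mathcal A}(S)$ one identifies $x^{\perp}\cap S$ by applying Lemma 4.4(a) to the pair $\{q,x\}$ and $S$ (noting that $p\notin q^{\perp}\cap S$ since $d(p,q)=2$), and when $q\in G_{\mathcal A}(S)$ one first uses the transfer Lemmas 4.6 and 4.7(b) to replace $S$ by a symplecton $R$ with $D_{\mathcal A}(R)=D_{\mathcal A}(S)$ in which $q$ sits in a controlled position, reducing to the previous subcase. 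The $N$--$N$ case is handled purely by Lemma 4.4(a): applied to $\{p,x\}$ it gives $x\in S\cup N_{\mathcal A}(S)$ unless $x^{\perp}\cap S\subseteq\overline A_p$, and symmetrically with $q$, so the only residual subcase is $x^{\perp}\cap S\subseteq\overline A_p\cap\overline A_q$. Finally the $N$--$G$ and $G$--$G$ cases are meant to be pushed back to the earlier ones, via Lemmas 4.5(b), 4.6 and 4.7, by moving to a symplecton $R$ with $D_{\mathcal A}(R)=D_{\mathcal A}(S)$ through one of $p$, $q$, or through a conveniently chosen third point of $S$ or $N_{\mathcal A}(S)$.

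The single place where \emph{(WHA)} is needed, and the main obstacle, is exactly the residual situation in which both $p$ and $q$ lie outside $S$ (or outside the auxiliary symplecton $R$) and their traces on $S$ meet so meagrely that an a priori common neighbour $x$ could have $x^{\perp}\cap S$ empty, or a subspace too small to match any of the three defining conditions of $D_{\mathcal A}(S)$. To rule this out I would form an isometrically embedded $6$-circuit $H=(p_1,\dots,p_6)$ with $p_1=p$, $p_2=x$, $p_3=q$ and $p_4,p_5,p_6$ chosen among the points of $S$ and of $N_{\mathcal A}(S)$ so that one of the distance-two pairs of $H$ is polar; \emph{(WHA)} then produces a point $w\in p_1^{\perp}\cap p_3^{\perp}\cap p_5^{\perp}=p^{\perp}\cap q^{\perp}\cap p_5^{\perp}$, that is, a second common neighbour of $p$ and $q$ that is collinear with a point of $S$. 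One shows $w\in D_{\mathcal A}(S)$ by the cases already treated; then, $\{p,q\}$ being forced to be a polar pair (it now has two common neighbours), the common neighbours of $p$ and $q$ form a singular subspace of the symplecton $\ll p,q\gg$, so $x$ and $w$ are collinear, and the subspace property from Proposition 4.8 (together with $\ll p,q\gg\subseteq D_{\mathcal A}(S)$, which follows once two of its points and a common neighbour lie inside) yields $x\in D_{\mathcal A}(S)$. The delicate point throughout is the correct choice of $p_4,p_5,p_6$: one must verify that $H$ is genuinely isometric and actually carries the required polar pair, which forces a careful bookkeeping with \emph{(P2)--(P5)} and with the geometry of the rank-$4$ polar space $S$.
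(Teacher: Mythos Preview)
Your global plan---case split on which of $S$, $N_{\mathcal A}(S)$, $G_{\mathcal A}(S)$ contains $p$ and $q$, reduce via the transfer Lemmas 4.6 and 4.7 whenever possible, and invoke \emph{(WHA)} for the residual configuration in the $N$--$N$ case---is exactly the paper's. The difficulty lies entirely in your use of \emph{(WHA)}, and there the argument as written does not go through.

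You build the hexagon with $p_1=p$, $p_2=x$, $p_3=q$. Then \emph{(WHA)} hands you a point $w\in p^{\perp}\cap q^{\perp}\cap p_5^{\perp}$: a \emph{second} common neighbour of $p$ and $q$, not information about $x$ itself. Your bridge from $w$ to $x$ fails at two places. First, the claim that ``the common neighbours of $p$ and $q$ form a singular subspace of $\ll p,q\gg$'' is false: in a rank-$4$ polar space $p^{\perp}\cap q^{\perp}$ is a rank-$3$ polar space, not singular, so there is no reason for $x$ and $w$ to be collinear. Second, the assertion that $\ll p,q\gg\subseteq D_{\mathcal A}(S)$ ``follows once two of its points and a common neighbour lie inside'' is circular: that containment is the $2$-convex closure statement you are trying to prove.

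The fix is to relabel the hexagon so that the target point sits at an \emph{odd} vertex. In the residual $N$--$N$ case with $\overline A_p\cap\overline A_q=\emptyset$, take the hexagon $(p_1,r_1,q_1,q,x,p)$ where $p_1\in\overline A_p\setminus x^{\perp}$, $q_1\in\overline A_q\setminus(x^{\perp}\cup p_1^{\perp})$, and $r_1\in p_1^{\perp}\cap q_1^{\perp}\setminus(p^{\perp}\cup q^{\perp})$. The pair $\{p_1,q_1\}$ lies in $S$ and is polar there; \emph{(WHA)} yields $w\in p_1^{\perp}\cap q_1^{\perp}\cap x^{\perp}$. Since $p_1,q_1$ are non-collinear in $S$, one has $p_1^{\perp}\cap q_1^{\perp}\subseteq S$, so $w\in S\cap x^{\perp}$. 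Now Lemma~4.4(a), applied to $p\in N_{\mathcal A}(S)$ and $x\in p^{\perp}$ with $w\in x^{\perp}\cap S\setminus\overline A_p$, gives $x\in S\cup N_{\mathcal A}(S)$ directly---no further convexity argument needed. (The subcase $\overline A_p\cap\overline A_q=L$ a line is handled, as you suggested, purely by a transfer lemma and does not need \emph{(WHA)}.)
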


\begin{proof}Let $p, q \in D_{\mathcal A}(S)$ be two points at distance $2$. We have to prove that $p^{\perp} \cap q^{\perp} \subset D_{\mathcal A}(S)$.

\medskip
$(i)$. If $p,q \in S$ then obviously $p^{\perp} \cap q^{\perp} \subset
S$ since $S$ is a $2$-convex subspace of $\Gamma$.

\medskip
$(ii)$. Let now $p \in N_{\mathcal A}(S)$ and $q \in S$. The pair $\lbrace p,q \rbrace$ is polar. Set $R = \ll p,q \gg$. Since $R \cap S \in \mss{\mathcal B}{S}$ it follows, by Lemma $4.6$ that $D_{\mathcal A}(S) = D_{\mathcal A}(R)$. Hence $p^{\perp} \cap q^{\perp} \subset R \subset D_{\mathcal A}(S)$.

\medskip
$(iii)$. Consider now the case when $p,q \in N_{\mathcal A}(S)$. Then ${\overline A}_p = p^{\perp} \cap S$ and ${\overline A}_q =
q^{\perp} \cap S$ are two maximal singular subspaces of $S$ from the same
class. There are two cases to consider:

\smallskip
$(iii.a)$. Assume ${\overline A}_p \cap {\overline A}_q =L$ is a line. Let $r \in {\overline A}_q \setminus L$ and set $R = \ll r,p \gg$. Then $R \cap S \in M_{\mathcal B}(S)$ and, by Lemma $4.6,\; D_{\mathcal A}(S) = D_{\mathcal A}(R)$. So $q \in D_{\mathcal A}(R)$ and because $q^{\perp} \cap R \supseteq \langle r, L \rangle $ it follows $q \in N_{\mathcal A}(R)$. Note that $q \not \in R$ since $A_q \cap R = \langle r, p^{\perp} \cap A_q \rangle$ has rank $3$ and thus is already maximal in $R$. Now $p \in R,\; q \in N_{\mathcal A}(R)$ and according to Step $(ii)$ of this Proposition, $p^{\perp} \cap q^{\perp} \subset D_{\mathcal A}(R) =D_{\mathcal A}(S)$.

\smallskip
$(iii.b)$. Assume now that ${\overline A}_p$ and ${\overline A}_q$ are disjoint maximal singular subspaces of $S$. Let $r \in p^{\perp} \cap q^{\perp}$. Take $p_1 \in {\overline A}_p \setminus r^{\perp}$ and $q_1 \in {\overline A}_q \setminus ( r^{\perp} \cup p_1^{\perp})$. Also let $r_1 \in p_1^{\perp} \cap q_1^{\perp}$ be such that $r_1 \not \in p^{\perp} \cup q^{\perp}$. Now $H = (p_1, r_1, q_1, q,r,p)$ is a minimal $6$-circuit in $\Gamma$ which contains at least one polar pair $\lbrace p_1, q_1 \rbrace$. Then {\it (WHA)} applies and there exists a point $w \in r^{\perp} \cap p_1^{\perp} \cap q_1^{\perp} \subset S$. Thus, according to Lemma $4.4(a)$, $r \in N_{\mathcal A}(S) \subset D_{\mathcal A}(S)$.

\medskip
$(iv)$. Let $p \in G_{\mathcal A}(S)$ and $q \in S$. Let $\lbrace t \rbrace = p^{\perp} \cap S$. If $q \in t^{\perp}$ then $R = \ll p,q \gg$ is contained in $D_{\mathcal A}(S)$; see Lemma $4.7(b)$. So let us assume $q \not \in t^{\perp}$. Take $A \in {\mathcal A}(S) \cap {\mathcal A}_t$. Then according to Lemma $4.5$, $p^{\perp} \cap A$ is a plane. Let $p_1 \in p^{\perp} \cap A \setminus \lbrace t \rbrace$ and let $r \in t^{\perp} \cap S \setminus A$. Set $R = \ll p_1, r \gg$. Since $R \cap S = \langle r, r^{\perp} \cap A \rangle  \in \mss{\mathcal B}{R} \cap \mss{\mathcal B}{S}$ it follows, by Lemma $4.6$ that $D_{\mathcal A}(S) = D_{\mathcal A}(R)$. Now $p^{\perp} \cap R \supset p_1 t$ so $p \in N_{\mathcal A}(R)$. Also $q \in N_{\mathcal A}(R)$. According to Step $(iii)$ of this Proposition $p^{\perp} \cap q^{\perp} \subset D_{\mathcal A}(R) = D_{\mathcal A}(S)$.

\medskip
$(v)$. Let $p \in N_{\mathcal A}(S), \; q \in G_{\mathcal A}(S)$. Let
$\lbrace r \rbrace = q^{\perp} \cap S$ and ${\overline A}_p = p^{\perp} \cap S$. Assume first that $r \not\in {\overline A}_p$. Set $R = \ll p, r \gg$. Then $R \cap S
\in M_{\mathcal B}(S)$ and according to Lemma $4.6, \; D_{\mathcal A}(R) = D_{\mathcal A}(S)$. Now $p \in R,\; q \in D_{\mathcal A}(R)$ and using the above results of this Proposition, it follows $p^{\perp} \cap q^{\perp} \subset D_{\mathcal A}(R) = D_{\mathcal A}(S)$. Next consider the case $r \in {\overline A}_p$. Let $t \in {\overline A}_p$ be a point and set $T = \ll q,t \gg$. By Lemma $4.7(b),\; D_{\mathcal A}(S) = D_{\mathcal A}(T)$. So we may apply the results of the Steps $(i)-(iii)$ of this Proposition to the pair of points $ p \in D_{\mathcal A}(T),\; q \in T$ and conclude that $p^{\perp} \cap q^{\perp} \subset D_{\mathcal A}(T) = D_{\mathcal A}(S)$.

\medskip
$(vi)$. Let now $p,q \in G_{\mathcal A}(S)$ be such that $p^{\perp} \cap S =
\lbrace p_1 \rbrace$ and $q^{\perp} \cap S = \lbrace q_1 \rbrace$. If $p_1 \not \in q_1^{\perp}$ then let $r \in p_1^{\perp} \cap q_1^{\perp}$; if $p_1 \in q_1^{\perp} \setminus \lbrace q_1 \rbrace$ then we let $r = q_1$ and if $p_1 = q_1$ then take $r \in p_1^{\perp} \cap S$. Set $R =  \ll p, r \gg$. By Lemma $4.7(b),\; D_{\mathcal A}(S) = D_{\mathcal A}(R)$. Now $q \in G_{\mathcal A}(S) \subset D_{\mathcal A}(R)$ and $p \in R$ hence using previous results of this Proposition, $p^{\perp} \cap q^{\perp} \subset D_{\mathcal A}(R) = D_{\mathcal A}(S)$.
\end{proof}

\begin{lem} Let $p \in G_{\mathcal A}(S)$ with $\lbrace p_1 \rbrace = p^{\perp} \cap S$ and let $w \in S$. Then $d(p,w) = 1+d(p_1,w)$.
\end{lem}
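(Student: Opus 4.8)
The plan is to establish the two inequalities $d(p,w)\le 1+d(p_1,w)$ and $d(p,w)\ge 1+d(p_1,w)$ separately, working inside the subspace $S$ where distances between points of $S$ are known (a polar space of type $D_{4,1}$, so any two points of $S$ are at distance at most $2$). Since $p\notin S$ while $p_1=p^\perp\cap S\in S$, one immediate observation is that $d(p,w)\ge 1$ always, with equality precisely when $w\in p^\perp$, i.e.\ (by the gamma space property and $p^\perp\cap S=\{p_1\}$) precisely when $w=p_1$; this settles the case $d(p_1,w)=0$.

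For the upper bound, I would induct on $d(p_1,w)$. If $d(p_1,w)=1$ then $w\in p_1^\perp\cap S$, and since $p\in p_1^\perp$ as well we get a path $p,p_1,w$, so $d(p,w)\le 2 = 1+d(p_1,w)$. If $d(p_1,w)=2$ then, as $S$ is a symplecton of diameter $2$, any point $w\in S$ has $d(p_1,w)\le 2$, and a path $p,p_1,x,w$ with $x\in p_1^\perp\cap w^\perp\cap S$ gives $d(p,w)\le 3=1+d(p_1,w)$. So the upper bound is essentially free.

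The substance is the lower bound $d(p,w)\ge 1+d(p_1,w)$, equivalently: no geodesic from $p$ to $w$ can be ``shorter than the detour through $p_1$.'' Here I would argue by contradiction using a shortest path $p=x_0,x_1,\dots,x_k=w$ of length $k=d(p,w)$, and use that $x_1\in p^\perp$. Two cases: if $x_1\in S$ then $x_1\in p^\perp\cap S=\{p_1\}$, so $x_1=p_1$ and the tail $p_1,x_2,\dots,w$ has length $k-1\ge d(p_1,w)$, giving $k\ge 1+d(p_1,w)$ as wanted. If $x_1\notin S$, then I want to ``push'' the path toward $S$ by replacing $x_1$ with a suitable point of $S$ collinear with $p_1$; the key mechanism is to invoke the classification of the intersection $x_1^\perp\cap S$: by \emph{(P5)} it is empty, a point, a line, or a maximal singular subspace of $S$, and since $x_1\in p^\perp$ with $p^\perp\cap S=\{p_1\}$, whenever $x_1^\perp\cap S\neq\emptyset$ it must contain $p_1$ and in fact lie in $p_1^\perp$. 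A short argument then shows $d(x_1,w)\ge d(p_1,w)-?$ cannot help the adversary: more precisely, replacing the initial segment $p,x_1$ by $p,p_1$ and connecting $p_1$ to the first point of the original path that lies in (or is collinear into) $S$ produces a path no longer than $k$ but passing through $p_1$, forcing $d(p,w)\ge 1+d(p_1,w)$. \textbf{The main obstacle} I anticipate is exactly this ``straightening'' step when the geodesic from $p$ leaves $S$: one must verify that a geodesic from $p$ to $w\in S$ can be chosen so that $x_1=p_1$, using the gamma space property, \emph{(P2)}, \emph{(P5)}, and the defining properties of $p\in G_{\mathcal A}(S)$ (in particular that for every $r\in p_1^\perp\cap S$ the pair $\{p,r\}$ is polar, so $p_1$ really is a ``gateway'' into $S$). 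I would handle this by looking at $x_1^\perp\cap S$ and $x_2$: if $x_2\in S$ then $x_1^\perp\cap S$ is nonempty and contains a point collinear with $x_2$ and with $p_1$, yielding the replacement directly; if $x_2\notin S$ one iterates, and since $k$ is finite the process terminates, which is where the induction on $k=d(p,w)$ (or equivalently on $d(p_1,w)$ after the reduction) closes the argument.
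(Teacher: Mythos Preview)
Your upper bound and the case $d(p_1,w)\le 1$ are fine; the only substantive case is $w\in S\setminus p_1^\perp$, where you must rule out $d(p,w)=2$. Here your argument breaks down. You take $x_1\in p^\perp\cap w^\perp$ and assert that ``since $x_1\in p^\perp$ with $p^\perp\cap S=\{p_1\}$, whenever $x_1^\perp\cap S\neq\emptyset$ it must contain $p_1$ and in fact lie in $p_1^\perp$.'' This implication is unjustified: collinearity of $x_1$ with $p$ tells you nothing about collinearity of $x_1$ with $p_1$, and none of {\it(P2)}, {\it(P5)}, the gamma space property, or the defining conditions of $G_{\mathcal A}(S)$ yield it. In the critical case $x_2=w\in S$ you have $w\in x_1^\perp\cap S$, so your claim would give $w\in p_1^\perp$ immediately---which is exactly the statement to be proved, not a consequence of what you have written. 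The iterative ``push toward $S$'' sketch does not repair this, since the same unproved step recurs at each stage.

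What is missing is precisely the mechanism that forces the intermediate point $x_1$ to behave well relative to $S$. In the paper this is supplied by first proving that $D_{\mathcal A}(S)$ is $2$-convex (Proposition~4.9, which is where {\it(WHA)} enters---note your argument never invokes {\it(WHA)}, which should make you suspicious). Convexity forces any $t\in p^\perp\cap w^\perp$ into $D_{\mathcal A}(S)$; then one rules out $t\in N_{\mathcal A}(S)$ via Lemma~4.4(a) (else $p\in N_{\mathcal A}(S)$), so $t\in G_{\mathcal A}(S)$. The remaining work is a separate nontrivial claim: if $p,q\in G_{\mathcal A}(S)$ are collinear with feet $p_1,q_1\in S$, then $p_1\in q_1^\perp$. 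This claim requires a genuine argument using Lemmas~4.4--4.6 and the structure of $\res{\Gamma}{q}$; once established, applying it to the pair $p,t$ gives $w=t^\perp\cap S\in p_1^\perp$, the desired contradiction. Your proposal contains neither the $2$-convexity input nor this ``collinear feet'' claim, and without them the lower bound does not follow.
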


\begin{proof} We start by proving the following {\it claim}: if $p,q \in G_{\mathcal A}(S)$ are two collinear points with $\lbrace p_1 \rbrace = p^{\perp} \cap S$ and $\lbrace q_1 \rbrace = q^{\perp} \cap S$ then $q_1 \in p_1^{\perp}$. Assume by contradiction that $q_1 \not \in p_1^{\perp}$. Let $A \in {\mathcal A}_{p_1} \cap{\mathcal A}(S)$. Then, according to Lemma $4.5,\; p^{\perp} \cap A$ is a plane. Let $x \in p^{\perp} \cap A \setminus \lbrace p_1 \rbrace$. Also let $r \in p_1^{\perp} \cap q_1^{\perp} \setminus A$. Set $R = \ll r, x \gg$ and observe that $R \cap S = \langle r, r^{\perp} \cap A \cap S\rangle  \in M_{\mathcal B}(S)$ and, by Lemma $4.6, \; D_{\mathcal A}(R) = D_{\mathcal A}(S)$. Note that $p^{\perp} \cap R \supset p_1x$ and consequently $p \in N_{\mathcal A}(R)$. Set $\overline A_p = p^{\perp} \cap R$. Also $q \in G_{\mathcal A}(S) \subset D_{\mathcal A}(R)$ implies that there is a point $y \in q^{\perp} \cap R$. If $y \not \in {\overline A}_p$, since $p$ and $q$ are collinear, it follows, by Lemma $4.4(a)$, that $q \in N_{\mathcal A}(R)$. But then $q^{\perp} \cap R$ and $R \cap S$ are maximal singular subspaces in $R$ from different families and therefore they have a point in common. Since $y \not \in R \cap S$ it follows that $q^{\perp} \cap R$ contains more than a point, which is a contradiction with the fact that $q \in G_{\mathcal A}(S)$. Therefore $y \in {\overline A}_p$. Next $q_1 \in S$ and, by Lemma $4.6$, $q_1 \in N_{\mathcal A}(R)$. Let $\overline A _{q_1} = q_1^{\perp} \cap R$ and observe that $y \in {\overline A}_{q_1}$ because otherwise another application of Lemma $4.4(a)$ to the pair $\lbrace q, q_1 \rbrace$ and symplecton $R$ would give $q \in N_{\mathcal A}(R)$, a contradiction. Therefore $y \in {\overline A}_p \cap {\overline A}_{q_1}$ which, using {\it (P3)} implies ${\overline A}_p \cap {\overline A}_{q_1} = L$ is a line. But $r \not \in x^{\perp}$ so $x \not \in L$. Since ${\overline A}_{q_1}$ has rank $3$ and since $S \cap R$ meets ${\overline A}_{q_1}$ in a plane it follows that $L \cap (R \cap S) \not = \emptyset$. But this implies that $p^{\perp} \cap S$ contains more than a point. We reach a contradiction with the fact that $p \in G_{\mathcal A}(S)$. Therefore the assumption made was false and $p_1 \in q_1^{\perp}$; the claim is proved.

\medskip
In order to prove the Lemma it suffices to show that, if $p \in G_{\mathcal A}(S)$ and $w \in S \setminus p_1^{\perp}$ then $d(p, w) = 3$. Assume by contradiction that $d(p, w) = 2$. Then there exists a point $t \in p^{\perp} \cap w^{\perp}$. Note that $t$ cannot be in $S$. According to Proposition $4.10,\; t \in D_{\mathcal A}(S)$. Moreover $t \not \in N_{\mathcal A}(S)$, because since $w \not \in p_1^{\perp}$, Lemma $4.4(a)$ would imply $p \in N_{\mathcal A}(S)$, a contradiction. So we must have $t \in G_{\mathcal A}(S)$. But now, according to the previous paragraph $w \in p_1^{\perp} \cap S$, which contradicts the hypothesis on $w$. Therefore the assumption made was false and in this case $d(p,w)=3$.
\end{proof}

\begin{prop} For any $S \in {\mathcal S},\ D_{\mathcal A}(S)$ is a strong parapolar
subspace of $\Gamma$.
\end{prop}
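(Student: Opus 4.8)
The plan is to upgrade the two preceding propositions, with the transport Lemmas~$4.6$ and~$4.7$ as the main tool. Write $D:=D_{\mathcal A}(S)$. By Proposition~$4.10$ it is a $2$-convex subspace of $\Gamma$, so two points of $D$ are at distance $2$ in $D$ exactly when they are at distance $2$ in $\Gamma$ (and distances $\le 1$ are unaffected). First I would record the easy structural facts: the induced point--line geometry on $D$ — whose lines are precisely the lines of $\Gamma$ contained in $D$ — is partial linear and a gamma space, these properties being inherited by every subspace of $\Gamma$; and $D$ is connected, since every $p\in N_{\mathcal A}(S)$ is joined inside $D$ to each point of $\overline{X}_p=p^\perp\cap S$ and every $p\in G_{\mathcal A}(S)$ is joined inside $D$ to the unique point of $p^\perp\cap S$ (these joining lines lie in $D$ because $D$ is a subspace), while $S$ is connected.

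Next I would equip $D$ with the symplecta $\mathcal S_D:=\{R\in\mathcal S\mid R\subseteq D\}$. Each such $R$ is a nondegenerate polar space of type $D_{4,1}$, hence of rank $\ge 2$, and is $2$-convex in $D$ because it is $2$-convex in $\Gamma$. Lemma~$4.6$ shows that every symplecton $R$ with $R\cap S\in\mss{\mathcal B}{S}$ lies in $\mathcal S_D$: then $D_{\mathcal A}(R)=D_{\mathcal A}(S)=D$, and $R\subseteq D_{\mathcal A}(R)$ trivially; Lemma~$4.7$ adds, among others, the symplecta meeting $S$ in a line in the presence of a point of $G_{\mathcal A}(S)$, so $\mathcal S_D$ is already a rich supply. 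The two axioms still to be verified are: (a) every line of $D$ lies in a member of $\mathcal S_D$; (b) every pair of points of $D$ at distance $2$ in $D$ lies in a member of $\mathcal S_D$. Granting these, the uniqueness of the symplecton on a quadrangle of $D$ follows from the corresponding uniqueness in $\Gamma$, and (b) is precisely strongness, so $D$ is a strong parapolar subspace of $\Gamma$.

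For (b), let $p,q\in D$ be at distance $2$ in $D$, hence at distance $2$ in $\Gamma$. Since $\Gamma$ is locally of type $A_{7,4}$ and the space $A_{7,4}$ is a strong parapolar space (Section~$2.5.1$), $\Gamma$ has no special pairs — this is a routine argument in the point residue $\res{\Gamma}{r}$, where $r$ is a purported unique common neighbour — so $\{p,q\}$ is polar and $R:=\ll p,q\gg\in\mathcal S$. That $R\in\mathcal S_D$ is then obtained by exactly the case distinction on the mutual position of $p$, $q$ and $S$ already carried out in the proof of Proposition~$4.10$: in each case $R\cap S$ is shown to lie in $\mss{\mathcal B}{S}$, or $R$ is connected to $S$ through one or two intermediate symplecta to which Lemmas~$4.6$ and~$4.7$ apply, yielding $D_{\mathcal A}(R)=D_{\mathcal A}(S)$ and hence $R\subseteq D$. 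Axiom (a) is handled by the same machinery: when $L\subseteq S$ one exhibits $L$ inside a $\mathcal B$-generator of $S$, which is the trace on $S$ of a symplecton of $\mathcal S_D$; when $L\not\subseteq S$ one chooses an auxiliary point $x\in L^\perp\cap D$ with $x^\perp\cap L$ a single point $p_0$, applies (b) to the distance-$2$ pair $\{x,p_1\}$ with $p_1\in L\setminus\{p_0\}$ to get a symplecton $R\in\mathcal S_D$ through $x$ and $p_1$, and notes that $R$, being $2$-convex, contains $p_0$ and hence $L$.

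The step I expect to be the main obstacle is the identity $D_{\mathcal A}(R)=D_{\mathcal A}(S)$ for $R=\ll p,q\gg$ in (b), together with its analogue in (a): this is where the full combinatorial weight of the section re-enters. The delicate point is the class bookkeeping — one must first decide to which of the two classes $\mathcal A$, $\mathcal B$ the singular subspaces $R\cap S$ and $p^\perp\cap S$ belong before one is in a position to apply Lemma~$4.6$ (intersection a $\mathcal B$-generator of $S$) or Lemma~$4.7$ (intersection a line), exactly as in the proofs of Propositions~$4.9$ and~$4.10$.
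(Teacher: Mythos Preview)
Your plan has a genuine gap at the heart of step (b). You assert that $\Gamma$ has no special pairs, arguing from the fact that the point residue $A_{7,4}$ is a strong parapolar space. That inference does not go through. If $\{p,q\}$ were special in $\Gamma$ with unique common neighbour $r$, then in $\res{\Gamma}{r}$ the two ``points'' $pr$ and $qr$ would have \emph{no} common neighbour at all: a common neighbour would be a line $L$ through $r$ with $L\subseteq p^{\perp}\cap q^{\perp}=\{r\}$, which is impossible. Hence $pr$ and $qr$ sit at distance at least $3$ in the residue. Strong parapolarity of $A_{7,4}$ only says that distance-$2$ pairs there are polar; it says nothing about pairs at larger distance, and $A_{7,4}$ has diameter $4$, so such pairs are plentiful. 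The residue argument therefore does not exclude special pairs in $\Gamma$. The paper never assumes $\Gamma$ is strong parapolar --- the polar-pair clause built into the definition of $G_{\mathcal A}(S)$ and the wording of Lemma~4.5(b) are there precisely because special pairs in $\Gamma$ are not ruled out a priori.

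The paper's proof meets this head-on: it shows directly that every distance-$2$ pair \emph{inside} $D_{\mathcal A}(S)$ is polar, via the stratified case analysis on $S\cup N_{\mathcal A}(S)\cup G_{\mathcal A}(S)$. The base case has one of the two points in $S$: there the distance formula of Lemma~4.10 forces the $S$-point into the perp of the foot $p_1$ of the $G_{\mathcal A}(S)$-point, and the polar-pair clause in the definition of $G_{\mathcal A}(S)$ then yields polarity. All other cases are transported to this base case by Lemmas~4.6 and~4.7(b). Once polarity of $\{p,q\}$ is known, $\ll p,q\gg\subseteq D$ is immediate from the $2$-convexity already established (Proposition~4.9 in the paper's numbering; your ``Proposition~4.10'' is off by one), so the further case distinction you invoke for the containment $R\subseteq D$ is unnecessary. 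Your treatment of connectedness, the inherited gamma-space axioms, and axiom~(a) is reasonable, but everything hinges on~(b), and for~(b) you need the paper's direct argument in place of the residue shortcut.
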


\begin{proof} We have to prove that if $p,q \in D_{\mathcal A}(S)$ are two points at distance two, the pair $\lbrace p,q \rbrace$ is polar, that is $p^{\perp} \cap q^{\perp}$ contains at least two points.

\medskip
$(i)$. Let $p \in D_{\mathcal A}(S)$ and $q \in S$. If both $p$ and $q$ are in $S$ then $S = \ll p, q \gg$ and we are done. Assume next $p \in N_{\mathcal A}(S)$. Then $q^{\perp} \cap p^{\perp} \cap S$ contains a plane and therefore $\lbrace p,q \rbrace$ is a polar pair. Let now $p \in G_{\mathcal A}(S)$ with $\lbrace p_1 \rbrace = p^{\perp} \cap S$. In this case, since $d(p,q) = 2$, by Lemma $4.10$ it follows that $q \in p_1^{\perp}$. Then the fact that $\lbrace p, q \rbrace$ is polar pair follows from the definition of $G_{\mathcal A}(S)$.

\medskip
$(ii)$. Assume now $p,q \in N_{\mathcal A}(S)$. Let ${\overline A}_p = p^{\perp} \cap S$ and ${\overline A}_q = q^{\perp} \cap S$. If ${\overline A}_p \cap {\overline A}_q \not = \emptyset$ the result is immediate. So we may assume that ${\overline A}_p \cap {\overline A}_q = \emptyset$. Let $r \in {\overline A}_p$. Then set $R = \ll q, r \gg$ which, by Lemma $4.6$, is such that $D_{\mathcal A}(R) = D_{\mathcal A}(S)$. Thus $p \in G_{\mathcal A}(S) \subset D_{\mathcal A}(R),\; q \in R$ and by Step $(i)$ of this Proposition it follows that $\lbrace p, q \rbrace$ is a polar pair.

\medskip
$(iii)$. Let $p \in N_{\mathcal A}(S)$ and $q \in G_{\mathcal A}(S)$. Let ${\overline A}_p = p^{\perp} \cap S$ and $\lbrace t \rbrace = q^{\perp} \cap S$. If $t \in {\overline A}_p$ then, according to Lemma $4.5(b),\; \lbrace p, q \rbrace$ is a polar pair. Assume next that $t \not \in {\overline A}_p$. Take a point $r \in t^{\perp} \cap {\overline A}_p$ and set $R = \ll q, r \gg$. By Lemma $4.7(b), \; D_{\mathcal A}(R) = D_{\mathcal A}(S)$. So $q \in R,\; p \in N_{\mathcal A}(S) \subset D_{\mathcal A}(R)$ and $\lbrace p,q \rbrace$ is a polar pair by Step $(i)$ of this Proposition.

\medskip
$(iv)$. Let now $p,q \in G_{\mathcal A}(S)$ with $\lbrace r \rbrace = p^{\perp} \cap S$ and $\lbrace t \rbrace = q^{\perp} \cap S$. If $r \in t^{\perp} \setminus \lbrace t \rbrace$ set $R = \ll r, q \gg$. If $r = t$ then take $w \in r^{\perp}$ and if $r \not \in t^{\perp}$ take $w \in r^{\perp} \cap t^{\perp}$ and set $R =\ll w, q \gg$. Then, by Lemma $4.7(b),\; D_{\mathcal A}(R) = D_{\mathcal A}(S)$ and since $p \in G_{\mathcal A}(S) \subset D_{\mathcal A}(R), q \in R$, the pair $\lbrace p,q \rbrace$ is polar by Step $(i)$ of this Proposition.
\end{proof}

\begin{prop}Given $R \in {\mathcal S}$ there exists a unique element
$D_{\mathcal A}(S) \in {\mathcal D}_{\mathcal A}$, for some $S \in {\mathcal S}$ containing $R$.
\end{prop}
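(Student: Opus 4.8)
The plan is to settle existence in one line and to extract uniqueness from Lemmas 4.6 and 4.7 by a finite descent on the intersection $R\cap S$. Existence is immediate: since $S\subseteq D_{\mathcal A}(S)$ for every $S\in\mathcal S$, the set $D_{\mathcal A}(R)\in\mathcal D_{\mathcal A}$ already contains $R$. For uniqueness, let $R\subseteq D_{\mathcal A}(S)$ with $R\neq S$; I must show $D_{\mathcal A}(S)=D_{\mathcal A}(R)$. By Propositions 4.8, 4.9 and 4.11 the set $D_{\mathcal A}(S)$ is a $2$-convex strong parapolar subspace of $\Gamma$, so $R$ and $S$ are two of its symplecta and every $p\in R\setminus S$ lies in $\nas{S}$ or in $G_{\mathcal A}(S)$; I would then argue by a descent on the ``size'' of $R\cap S$.

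If $R\cap S$ is a maximal singular subspace of $S$ (hence also of $R$): for $p\in R\setminus(R\cap S)$, \textit{(P5)} makes $p^{\perp}\cap S$ a maximal singular subspace of $S$ equal to $R\cap S$ or meeting it in a plane, and \textit{(P3)} forces the latter to be of $\mathcal B$-type; since $p$ lies in $D_{\mathcal A}(S)$, the ``$\mathcal A$-type'' possibility for $R\cap S$ would force $p^{\perp}\cap S=R\cap S$ for every such $p$, making $R\setminus(R\cap S)$ a set of pairwise collinear points --- absurd in a nondegenerate polar space of rank $\geq 2$. Hence $R\cap S$ is of $\mathcal B$-type; by \textit{(P1)} the rank-$5$ $\mathcal B$-subspace of $\Gamma$ through it is unique, so $R\cap S\in\mss{\mathcal B}{S}\cap\mss{\mathcal B}{R}$ and Lemma 4.6 gives $D_{\mathcal A}(S)=D_{\mathcal A}(R)$.

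If $R\cap S=L$ is a line: in the polar space $R$ some $p\in R\setminus L$ is not collinear with all of $L$, so $p^{\perp}\cap L$ is a single point $q$; if $p$ were in $\nas{S}$ then, taking $t\in L\setminus\{q\}$, the pair $\{p,t\}$ would be polar (its perp contains a plane of $p^{\perp}\cap S$), forcing $R=\ll p,t\gg$ by uniqueness of the symplecton containing a polar pair, so that $R\cap S=\ll p,t\gg\cap S$ would be a rank-$3$ maximal singular subspace rather than $L$ --- a contradiction. Thus $p\in G_{\mathcal A}(S)$ and Lemma 4.7(a) applies. In the remaining cases --- $R\cap S$ a plane, a point, or empty --- I would interpose a symplecton $T\subseteq D_{\mathcal A}(S)$ with $D_{\mathcal A}(T)=D_{\mathcal A}(S)$ and $(T,R)$ in an earlier case of the descent: for $p\in R\setminus S$ in $\nas{S}$ take $T=\ll p,t\gg$ with $t\in S$ at distance $2$ from $p$ chosen so that $T\cap S\in\mss{\mathcal B}{S}$ (Lemma 4.6); for $p\in R\setminus S$ in $G_{\mathcal A}(S)$ with $\{q\}=p^{\perp}\cap S$ take $T=\ll p,r\gg$ with $r\in q^{\perp}\cap S$ not collinear with $p$ (Lemma 4.7(b)). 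In each case $t$, resp.\ $r$, can be picked so that $T$ meets $R$ in a strictly larger subspace than $R\cap S$ does, and then applying the earlier case to $(T,R)$ gives $D_{\mathcal A}(T)=D_{\mathcal A}(R)$, hence $D_{\mathcal A}(S)=D_{\mathcal A}(R)$.

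I expect the main obstacle to be this interposition step: one must verify, in each of the plane, point and empty sub-cases, that the auxiliary symplecton $T$ can genuinely be chosen so that $(T,R)$ lands in a strictly earlier case of the descent, which calls for careful bookkeeping of the relative positions of $p^{\perp}\cap S$, $R\cap S$ and the maximal singular subspaces of the symplecta involved, via \textit{(P1)}--\textit{(P5)} and the uniqueness of the symplecton containing a polar pair. Everything else is a routine unwinding of Lemmas 4.6 and 4.7.
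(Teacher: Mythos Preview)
Your strategy is close to the paper's, but the organizing principle is different, and that difference is exactly where your acknowledged gap lies. The paper does not descend on the size of $R\cap S$; instead it writes $R=\ll p,q\gg$ for a polar pair $p,q\in D_{\mathcal A}(S)$ and splits cases on which of $S$, $N_{\mathcal A}(S)$, $G_{\mathcal A}(S)$ each of $p,q$ lies in. The base case is ``$q\in S$'': then either $p\in S$ (so $R=S$), or $p\in N_{\mathcal A}(S)$ (so $R\cap S=\langle q,q^{\perp}\cap\overline A_p\rangle\in M_{\mathcal B}(S)$ and Lemma~4.6 applies), or $p\in G_{\mathcal A}(S)$ (so Lemma~4.10 forces $q\in p_1^{\perp}$ and Lemma~4.7(b) applies). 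Every remaining case reduces to this one by interposing a symplecton $T$ with $D_{\mathcal A}(T)=D_{\mathcal A}(S)$ \emph{and one generator of $R$ already in $T$}: for instance, when $p,q\in N_{\mathcal A}(S)$ with $\overline A_p\cap\overline A_q=\emptyset$, take $t\in\overline A_p$ and $T=\ll t,q\gg$; then $q\in T$, so the base case (for $T$) finishes. The point is that once $q\in T$ the pair $(T,R)$ is automatically in the base case --- no bookkeeping on $R\cap T$ is needed.

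Your descent on $R\cap S$ stalls precisely because ``$T$ meets $R$ in a strictly larger subspace'' is not what the interposition actually gives you: Lemmas~4.6 and~4.7 hand you a $T$ containing a \emph{chosen point} of $R$, not a $T$ with prescribed large intersection with $R$. If you rewrite the reduction so that the chosen point is one of the two generators of $R$, you recover the paper's argument and the difficulty you flagged disappears. One case does require extra work even in the paper's scheme: when $p,q\in N_{\mathcal A}(S)$ with $\overline A_p\cap\overline A_q=L$ a line, one shows directly that $R\cap S=L$, then exhibits a point $z\in p^{\perp}\cap q^{\perp}$ with $z^{\perp}\cap L$ a single point and proves $z\in R\cap G_{\mathcal A}(S)$, so that Lemma~4.7(a) applies; your line case is essentially this argument.

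A small cleanup in your maximal-singular case: the alternative $p^{\perp}\cap S=R\cap S$ never occurs, since $R\cap S$ is maximal singular in $R$ and $p\in R\setminus(R\cap S)$ forces $p^{\perp}\cap(R\cap S)$ to be a proper plane section. So the contradiction with $p\in N_{\mathcal A}(S)$ follows directly from \textit{(P3)} without the pairwise-collinearity detour.
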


\begin{proof}
Let $R$ be a symplecton and assume that $R \subset D_{\mathcal A}(S)$ for some $S \in {\mathcal S}$. Write $R=\ll p,q \gg$ with $p$ and $q$ two points of $R$, at distance two in the collinearity graph. We analyze the possible relations between $D_{\mathcal A}(S)$ and $D_{\mathcal A}(R)$.

\medskip
$(i)$. Assume $p \in D_{\mathcal A}(S)$ and $q \in S$. If $p \in S$ then $S= \ll p,q \gg = R$. Next let $p \in N_{\mathcal A}(S)$ and set ${\overline A}_p = p^{\perp} \cap S$. Then $R \cap S = \langle q, q^{\perp} \cap {\overline A}_p \rangle  \in M_{\mathcal B}(S)$. Hence $D_{\mathcal A}(S) = D_{\mathcal A}(R)$, by Lemma $4.6$. Let now $p \in G_{\mathcal A}(S)$ with $\lbrace r \rbrace = p^{\perp} \cap S$. By Lemma $4.11$, $q \in r^{\perp} \cap S$ and by Lemma $4.7(b)$ it follows that $D_{\mathcal A}(R) = D_{\mathcal A}(S)$.

\medskip
$(ii)$. Let $p, q \in N_{\mathcal A}(S)$ with ${\overline A}_p = p^{\perp} \cap S$ and ${\overline A}_q = q^{\perp} \cap S$.

\smallskip
$(ii.a)$. Assume ${\overline A}_p \cap {\overline A}_q = L$ is a line. {\it Claim}: $R \cap S = L$. Suppose by contradiction that $R \cap S$ contains a plane $\langle r,L \rangle $ where $r$ is a point not on $L$. Since $r \not \in A_p \cap A_q$ we can assume, without loss of generality, that $r \not \in A_q$. If $r \in {\overline A}_p$ then $\langle r, q^{\perp} \cap A_p, p \rangle$ is a singular subspace of rank $4$ in $R$, a contradiction. So let us assume that $r \in S \setminus ({\overline A}_p \cup {\overline A}_q)$. Then $\langle r, r^{\perp} \cap {\overline A}_p, r^{\perp} \cap {\overline A}_q \rangle  \subset R \cap S$ which is a contradiction with the fact that $R \cap S$ can be at most a common maximal singular subspace. Therefore the claim is proved. Now pick a point $z \in p^{\perp} \cap q^{\perp} \setminus L$ such that $z^{\perp} \cap L = \lbrace t \rbrace$, a single point. We intend to prove that $z \in G_{\mathcal A}(S)$. According to Proposition $4.11, \; z\in D_{\mathcal A}(S)$, so it suffices to prove that $z^{\perp} \cap S = \lbrace t \rbrace$. Assume by contradiction that $z \in N_{\mathcal A}(S)$ with ${\overline A}_z = z^{\perp} \cap S$. Then, according to {\it (P3)}, ${\overline A}_z \cap {\overline A}_p$ is a line $L'$ distinct from $L$. So if $w \in L \setminus \lbrace t \rbrace$ then $\langle L, L' \rangle \subset z^{\perp} \cap w^{\perp} \subset R \cap S$ and we reach a contradiction with the previous result that $R \cap S = L$. Therefore $z^{\perp} \cap S = \lbrace p \rbrace$ and $z \in G_{\mathcal A}(S)$. Now $R$ and $S$ are as in Lemma $4.7(a)$ and consequently $D_{\mathcal A}(R) = D_{\mathcal A}(S)$.

\smallskip
$(ii.b)$. Let ${\overline A}_p \cap {\overline A}_q = \emptyset$. Let $t \in {\overline A}_p$ and set $T = \ll t,q \gg$. Thus $T \cap S = \langle  t, t^{\perp} \cap {\overline A}_q \rangle  \in M_{\mathcal B}(S)$ and by Lemma $4.6,\; D_{\mathcal A}(T) = D_{\mathcal A}(S)$. Now $p \in D_{\mathcal A}(T), q \in T$ so by Step $1$, $R = \ll p, q \gg$ is such that $D_{\mathcal A}(R) = D_{\mathcal A}(T) = D_{\mathcal A}(S)$.

\medskip
$(iii)$. Let $p \in N_{\mathcal A}(S), \; q \in G_{\mathcal A}(S)$ with $q^{\perp} \cap S = \lbrace r \rbrace$. If $r \in {\overline A}_p$ then let another point $t \in {\overline A}_p$ and set $T = \ll q, t \gg$. By Lemma $4.7b$, $D_{\mathcal A}(T) = D_{\mathcal A}(S)$. Apply Step $(i)$ to $p \in D_{\mathcal A}(T),\; q \in T$ to get $D_{\mathcal A}(R) = D_{\mathcal A}(T) = D_{\mathcal A}(S)$. If $r \not \in {\overline A}_p$ set $T = \ll p, r \gg$ and since $T \cap S \in M_{\mathcal B}(S)$, Lemma $4.6$ gives $D_{\mathcal A}(T)=D_{\mathcal A}(S)$. Now $p \in T$ and $q \in D_{\mathcal A}(T)$ and using Step $(i)$ again we get $D_{\mathcal A}(R) = D_{\mathcal A}(T) = D_{\mathcal A}(S)$.

\medskip
$(iv)$. Let $p,q \in G_{\mathcal A}(S)$ with $\lbrace p_1 \rbrace =
p^{\perp} \cap S$ and $\lbrace q_1 \rbrace = q^{\perp} \cap S$. If $p_1 = q_1$ take $r \in p_1^{\perp} \cap S$, set $T = \ll r, p \gg$. If $p_1 \in q_1^{\perp} \setminus \lbrace q_1 \rbrace$ take $T = \ll p, q_1 \gg$. If $p_1 \not \in q_1^{\perp}$ then take $r \in p_1^{\perp} \cap q_1^{\perp}$ and set $T =  \ll p,r \gg$. Then apply Step $(i)$ to get $D_{\mathcal A}(R) = D_{\mathcal A}(T) = D_{\mathcal A}(S)$.
\end{proof}

\medskip
\begin{proof}[{\it Proof Theorem 4.1}] Let {\pl} be a parapolar space
which is locally $A_{7,4}$. Assume that the maximal singular subspaces of $\Gamma$ partition in two classes. In addition, assume that $\Gamma$ satisfies the Weak Hexagon Axiom {\it (WHA)}. For any symplecton $S \in {\mathcal S}$ we defined two sets $D_{\mathcal X}(S)$ with ${\mathcal X} \in \lbrace {\mathcal A}, {\mathcal B} \rbrace$; see the beginning of Section $4$. Then $D_{\mathcal A}(S)$ is a $2$-convex subspace of
$\Gamma$, see Propositions $4.8$ and $4.9$, which is strong parapolar, by
Proposition $4.11$. Further, by Proposition $4.12$, every symplecton lies in a unique element of ${\mathcal D}_{\mathcal A}$.

\medskip
Consider a point-symplecton pair $(p, R)$ in $D_{\mathcal A}(S)$. Since $R \subset D_{\mathcal A}(S)$ it follows, by Proposition $4.12$ that $D_{\mathcal A}(R)=D_{\mathcal A}(S)$. Therefore, either $p \in R$ or $p^{\perp} \cap R$ is a single point or a maximal singular subspace of $R$. By the characterization theorem of Cohen and
Cooperstein \cite{cc}, see section $2.4$ also, $D_{\mathcal A}(S)$ is a space of type $D_{6,6}$.

\medskip
Next, replace ${\mathcal A}$ with ${\mathcal B}$. All of the above arguments can be repeated and we can conclude that $D_{\mathcal B}(S)$ is a subspace of $\Gamma$ which is of type $D_{6,6}$. Also, every symplecton $S \in {\mathcal S}$ is contained in exactly one element from the family ${\mathcal D}_{\mathcal B}$.
\end{proof}

\section{The sheaf theoretic characterization}

In this Section, we combine Ronan-Brouwer-Cohen sheaf theory with Tits' Local Approach Theorem and prove the following:

\begin{thm}Let ${\widetilde \Gamma}= ( {\widetilde {\mathcal P}}, {\widetilde {\mathcal L}})$ be a parapolar space which is locally of type $A_{7, 4}$. Let $I = \lbrace 1, \ldots 8 \rbrace, \; J = \lbrace 1, 8 \rbrace$ and $K = I \setminus J$. Assume that ${\widetilde \Gamma}$ satisfies the Weak Hexagon Axiom. Then there is a residually connected $J$-locally truncated diagram geometry $\Gamma$ which belongs to the diagram:\\
\begin{picture}(1000, 50)(0,0)
\put(0,25){$\mathcal{Y}$}
\put(60,25){$\qed$}
\put(75,15){\scriptsize 1}
\put(80,29){\line(1,0){32}}
\put(112,26){$\circ$}
\put(113,15){\scriptsize 2}
\put(113,36){\scriptsize ${\mathcal D}_{\mathcal B}$}
\put(118,29){\line(1,0){30}}
\put(148,26){$\circ$}
\put(150,36){\scriptsize ${\mathcal B}$}
\put(150,15){\scriptsize 3}
\put(153,29){\line(1,0){30}}
\put(183,26){$\circ$}
\put(190,36){\scriptsize ${\mathcal L}$}
\put(190,15){\scriptsize 5}
\put(186,-3){\line(0,1){30}}
\put(183,-9){$\circ$}
\put(171,-10){\scriptsize ${\mathcal P}$}
\put(190,-10){\scriptsize 4}
\put(188,29){\line(1,0){30}}
\put(218,26){$\circ$}
\put(220,36){\scriptsize ${\mathcal A}$}
\put(220,15){\scriptsize 6}
\put(223,29){\line(1,0){30}}
\put(253,26){$\circ$}
\put(253,36){\scriptsize ${\mathcal D}_{\mathcal A}$}
\put(255,15){\scriptsize 7}
\put(258,29){\line(1,0){30}}
\put(276,25){$\qed$}
\put(290,15){\scriptsize 8}
\end{picture}

\vspace{.4cm}
whose universal $2$-cover is the truncation of a building. Therefore, ${\widetilde \Gamma}$ is the homomorphic image of a truncated building, also.
\end{thm}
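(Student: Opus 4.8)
\emph{Strategy.} The plan is to follow the blueprint of \cite{on1}, adapted to the fact that here there are \emph{two} global families of subspaces rather than a single one determined by rank. The argument has four stages: (1) enrich $\Gamma$ to a rank $6$ geometry and identify its residues with truncated residues of the affine $E_7$ diagram $\mathcal{Y}$; (2) verify residual connectedness; (3) construct a residually connected sheaf over this $J$-locally truncated geometry; (4) invoke Tits' Local Approach Theorem (stated in Section 2.3) to conclude that the universal $2$-cover is the truncation of a building, and then transport the conclusion to $\widetilde{\Gamma}$ along the fibering morphism of Theorem 3.1.

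\emph{Enrichment and the diagram.} First I would form the geometry $\Gamma_{\mathrm{enr}} = (\mathcal{P},\mathcal{L},\mathcal{A},\mathcal{B},\mathcal{D}_{\mathcal A},\mathcal{D}_{\mathcal B})$ over $K = I\setminus J = \{2,3,4,5,6,7\}$, giving these sets respectively the types $4,5,6,3,7,2$, and taking as incidence symmetrized containment (a line incident with an $\mathcal{A}$- or $\mathcal{B}$-member or a $\mathcal{D}$-member when contained in it, and $D_{\mathcal A}(S)$ incident with $D_{\mathcal B}(S')$ precisely when they share a symplecton). By Remark 3.2 the space $\Gamma$ is connected and locally $A_{7,4}$, and by Theorem 4.1 every symplecton lies in exactly one member of $\mathcal{D}_{\mathcal A}$ and one of $\mathcal{D}_{\mathcal B}$; this makes every maximal flag of $\Gamma_{\mathrm{enr}}$ a chamber, so $\Gamma_{\mathrm{enr}}$ is a geometry. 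The substantial point of this stage is the residue computation: the residue of a point is the $\{1,8\}$-truncation of the flag geometry of the space of type $A_{7,4}$ of Section 2.5.1, which carries exactly the families $\mathcal{A},\mathcal{B},\mathcal{S},\mathcal{L},\mathcal{M}$ needed; the residue of a $\mathcal{D}_{\mathcal A}$-object is, by Theorem 4.1 together with the Cohen--Cooperstein description of Section 2.5.2, the corresponding truncation of the flag geometry of a space of type $D_{6,6}$ (and symmetrically for a $\mathcal{D}_{\mathcal B}$-object), while the residues of lines, of $\mathcal{A}$- and of $\mathcal{B}$-objects reduce to direct sums of projective geometries. Matching these against the residues of the nodes $4,7,2,5,6,3$ of $\mathcal{Y}$ — all of which are spherical, since every proper residue of an affine Coxeter diagram is spherical — shows that $\Gamma_{\mathrm{enr}}$ is $\{1,8\}$-locally truncated of type $\widetilde{E_7}$. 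Residual connectedness then follows from connectedness of $\Gamma$ and of the $A_{7,4}$- and $D_{6,6}$-spaces by the standard patching argument of Brouwer and Cohen \cite{loc}.

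\emph{The sheaf.} Next I would build a residually connected sheaf $\Sigma$ over $\Gamma_{\mathrm{enr}}$. On single objects the values are forced: since the space of type $A_{7,4}$ is the genuine Grassmannian and admits no parapolar quotient (the ``too small'' remark of Section 3), the point residue lifts uniquely to the flag geometry $\Sigma(p)$ of a building of type $A_7$ over $\{1,2,3,5,6,7,8\}$; via the Cohen--Cooperstein theorem each $\mathcal{D}_{\mathcal A}$-object lifts to the flag geometry of a building of type $D_6\oplus A_1$ over $I\setminus\{7\}$ (symmetrically for $\mathcal{D}_{\mathcal B}$ over $I\setminus\{2\}$), and the $\mathcal{A}$-, $\mathcal{B}$- and $\mathcal{L}$-objects lift to the obvious product buildings over $I\setminus\{6\}$, $I\setminus\{3\}$, $I\setminus\{5\}$. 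One then sets $\Sigma(F)=\res{\Sigma(x)}{F\setminus\{x\}}$ for $x\in F$, takes the evident restrictions as connecting homomorphisms, and checks the cocycle identity $\varphi_{F_1,F_2}\circ\varphi_{F_2,F_3}=\varphi_{F_1,F_3}$. The crux of the whole proof is the well-definedness of $\Sigma(F)$: that for $x,y\in F$ the residues $\res{\Sigma(x)}{F\setminus\{x\}}$ and $\res{\Sigma(y)}{F\setminus\{y\}}$ are canonically isomorphic. The hardest instances are flags containing a point together with a $\mathcal{D}$-object, or a $\mathcal{D}_{\mathcal A}$- together with a $\mathcal{D}_{\mathcal B}$-object, where the compatibility of the $A_7$-building at the point with the $D_6$-building at the $\mathcal{D}$-object must be extracted from the incidence geometry of Section 4 — in particular from the fact (Theorem 4.1, Lemmas 4.6--4.7) that a symplecton determines, and is determined up to $D_{\mathcal A}$- and $D_{\mathcal B}$-equivalence by, the pair $(D_{\mathcal A}(S),D_{\mathcal B}(S))$. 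I expect this gluing verification to be the main obstacle; the diagram computation in the previous stage is lengthy but routine given Sections 2.5 and 4.

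\emph{Conclusion via Tits.} Once $\Sigma$ exists, Brouwer and Cohen \cite[Lemma 1]{loc} attach to it a chamber system $\mathcal{C}$ over $I=\{1,\dots,8\}$ of type $M=\widetilde{E_7}$ whose $\{1,8\}$-truncation recovers $\mathbf{C}(\Gamma_{\mathrm{enr}})$. For any $3$-subset $T\subseteq I$ the complement has five elements and hence meets $K$, so every rank $3$ residue of $\mathcal{C}$ lies inside one of the building-valued $\Sigma(x)$ and is therefore itself a building; a fortiori its universal $2$-cover is a building, and Tits' Local Approach Theorem applies: the universal $2$-cover $\mathcal{B}$ of $\mathcal{C}$ is a building of affine type $E_7$. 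Functoriality of truncation turns the covering $\mathcal{B}\to\mathcal{C}$ into a covering of $\mathbf{C}(\Gamma_{\mathrm{enr}})$ by the $\{1,8\}$-truncation of $\mathbf{G}(\mathcal{B})$, which is the universal $2$-cover of $\Gamma_{\mathrm{enr}}$ and exhibits $\Gamma_{\mathrm{enr}}$ — hence $\Gamma$ — as a homomorphic image of a truncated building of type $\widetilde{E_7}$. Composing this homomorphism with the fibering morphism $\phi_{\Gamma}:\Gamma\to\widetilde{\Gamma}$ of Theorem 3.1 finally shows that $\widetilde{\Gamma}$ is a homomorphic image of a truncated building as well.
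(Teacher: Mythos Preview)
Your proposal is correct and follows the same four-stage strategy as the paper: enrich $\Gamma$ to a rank~$6$ geometry over $K$, verify it is $\{1,8\}$-locally truncated with the affine $E_7$ diagram, build a residually connected sheaf, apply Brouwer--Cohen together with Tits' Local Approach Theorem, and finally compose with the fibering morphism of Theorem~3.1 to reach $\widetilde{\Gamma}$.

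The one organisational difference worth noting is in the sheaf construction. You propose to lift each rank~$1$ residue directly to a full building (e.g.\ a $\mathcal{D}_{\mathcal A}$-object to a building of type $D_6\oplus A_1$) and then check that $\res{\Sigma(x)}{F\setminus\{x\}}$ is independent of the choice of $x\in F$; you correctly flag this compatibility check as the crux but do not carry it out. The paper sidesteps this by splitting each sheaf value as a direct sum $\Sigma(x)=\Sigma^L(x)\oplus\Sigma^R(x)$: the ``right'' part $\Sigma^R$ is defined first at a type-$2$ object (the $\{8\}$-locally truncated $D_{6,6}$) and then propagated recursively to types $3,\ldots,7$ along residues, and symmetrically $\Sigma^L$ is built from a type-$7$ object down through types $6,\ldots,2$; the point residue (type~$4$) is handled separately via the Brouwer--Cohen uniqueness theorem for $A_7$. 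The sheaf is constructed only on flags of rank $1$ and $2$, compatibility is checked there by a short direct-sum calculation, and an Ellard--Shult lemma extends it to all nonempty flags. This split answers concretely the question your sketch leaves open --- at a $\mathcal{D}_{\mathcal A}$-object, for instance, the $D_6$ factor is $\Sigma^L$ while the isolated $A_1$ factor (the type-$8$ objects) is supplied by $\Sigma^R$, already built from the opposite end of the diagram --- so the global gluing verification is dissolved into a one-step recursion whose well-definedness is checked along galleries in a rank-$2$ chamber system. Your direct lift-and-glue approach would also succeed, but making those $A_1$ factors canonical and mutually compatible is exactly what the paper's left/right recursion accomplishes.
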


\begin{proof}Let us assume that ${\widetilde \Gamma}= ( {\widetilde {\mathcal P}}, {\widetilde {\mathcal L}})$ is a parapolar space, locally of type $A_{7,4}$ and which satisfies {\it(WHA)}. Construct the space {\pl} according to the method described in Section $3$. Then $\Gamma$ is also locally of type $A_{7,4}$ and satisfies {\it(WHA)}. Furthermore, $\Gamma$ has two classes of maximal singular subspaces ${\mathcal A}$ and ${\mathcal B}$, whose elements are projective spaces of rank $5$. It was proved in Section $4$ that in $\Gamma$ there are two more families of subspaces, denoted ${\mathcal D}_{\mathcal A}$ and ${\mathcal D}_{\mathcal A}$, whose elements are of type $D_{6,6}$. Define incidence as follows:
\begin{itemize}
\item[(i).]An object in $\mathcal{A}$ is incident with an object in $\mathcal{B}$ if they intersect at a plane.
\item[(ii).] An object in $\mathcal{B}$ (or $\mathcal{A}$) is incident with an object from $\mathcal{D}_{\mathcal A}$ ($\mathcal{D}_{\mathcal B}$ respectively) if they intersect at a singular subspace of rank $3$.
\item[(iii).] An object from $\mathcal{D}_{\mathcal A}$ is incident with an object from $\mathcal{D}_{\mathcal B}$ if they have a symplecton in common.
\item[(iv).] Inclusion for all the remaining cases.
\end{itemize}

\medskip
Hence, the space $\Gamma$ can be enriched to a rank $6$ geometry $\Gamma = (\mathcal{P, L, A, B}, {\mathcal D}_{\mathcal A}, {\mathcal D}_{\mathcal B})$. It follows from Theorem $3.1$ that $\Gamma$ is a covering of ${\widetilde \Gamma}$. Furthermore, $\Gamma$ is a geometry over $K$ which is $J$-locally truncated with respect to the diagram $\mathcal{Y}$.

\medskip
In the sequel we will use the terminology given in Sections $2.1$-$2.3$; the definition of a sheaf over a locally truncated geometry was given in Section $2.4$.

\medskip
Next we construct a sheaf $\Sigma$ over the collection $\mathcal{F}$ of flags of rank $1$ and $2$ of $\Gamma$. It is a consequence of a result of Ellard and Shult \cite{els}, see also \cite[Section 6]{on1}, that it suffices to work with the collection $\mathcal{F}$ of flags, instead of the full family of nonempty flags of $\Gamma$.

\medskip
${\mathfrak 1})$. For $a \in {}^{2}\Gamma$, an object of type $2$ in $\Gamma$, define $\Sigma ^R(a)$ to be the geometry of $\lbrace 8 \rbrace$-locally truncated type belonging to the diagram $\mathcal{Y}_{I \setminus \lbrace 1,2 \rbrace}$ and which satisfies the property that $\res {\Gamma}{a} =\;  _{\lbrace 8 \rbrace}\Sigma ^R(a)$. This is well defined since $\Sigma ^R (a)$ is of $\lbrace 8 \rbrace$-locally truncated type $D_{6,6}$, which, up to the relabeling of the nodes is unique. In $\Sigma
^R (a)$ the objects of types in $K$ are the same as in $\Gamma$ with the corresponding incidence. The objects of type $\lbrace 8 \rbrace$ are collections of objects in $\Gamma$ with their flags, which are incident with a given object of type in $K$; the incidence between objects of type $\lbrace 8 \rbrace$ is given by symmetrized containment.

\medskip
${\mathfrak 2})$. For $b \in \; ^{4}\Gamma$ define $\Sigma(b)$ to be such that $\res{\Gamma}{ b} =\;_J \Sigma(b)$  Observe that $\res{\Gamma}{b}$ is
the $J$-truncation of a geometry belonging to the diagram $D_{I
\setminus \lbrace 4 \rbrace}$ of type $A_{7,4}$. But this is
uniquely determined by its truncation, \cite[Theorem 1]{loc}, and thus $\Sigma(b)$ is unambiguously defined. There are two types of objects in $\Sigma
(b)$: those inherited from $\Gamma$, with their incidence and the objects with types
in $J$, which are defined as in ${\mathfrak 1})$.

\medskip
${\mathfrak 3})$. Let now $l \in \lbrace 3, \dots, 7 \rbrace$ and let $x_l \in \;
^{l}\Gamma$. Denote by $F = \lbrace a,x \rbrace $ a $\lbrace 2, l-1 \rbrace $-flag in $\res{\Gamma}{x_l}$. For $l=3$, $F = \lbrace a \rbrace$ is just an object of type $2$. Define recursively: $\Sigma ^R (x_l) = \Sigma ^R (a, x_l) := \res{\Sigma ^R (a,x)}{x_l}$. Let $F'$ be another flag of type
$\lbrace 2, l-1 \rbrace$ in $\res{\Gamma }{x_l}$ which is $i$-adjacent to $F$ in $C$, the chamber system associated to $^{\lbrace 2, l-1 \rbrace}\res {\Gamma} {x_l}$ with  $i \in \lbrace 2, l-1 \rbrace$. Then: $\res{\Sigma ^R (F)}{x_l} = \res{\Sigma ^R (F \cap F')} {(F \setminus F') \cup \lbrace x_l \rbrace } = \res{\Sigma ^R (F') }{x_l}$ which proves the well-definedness in this case. If $F$ and $F'$ are not
$i$-adjacent, since the chamber system $C$ is connected, there is a
gallery from $F$ to $F'$, and by repeated applications of the above
argument we get the result.

\medskip
${\mathfrak 4})$. Let $f \in \; ^{7} \Gamma$. Define $\Sigma ^L (f)$ to be the
geometry of $\lbrace 1 \rbrace$-locally truncated type, with diagram $\mathcal{Y}_{I \setminus \lbrace 7,8 \rbrace}$, and such that $\res{\Gamma}{ f} = \;
_{\{1\}} \Sigma ^L (f)$; this is uniquely defined since it has diagram of type $D_{6,6}$. The objects in $\Sigma ^L(f)$ and their incidence are defined as in the first step.

\medskip
${\mathfrak 5})$. Let $l \in \lbrace 6, \ldots 2 \}$, taken in this order. Let $x_l \in
\; ^{l}\Gamma$ and $F = \lbrace x, f \rbrace$ a $\lbrace l+1, 7 \rbrace$-flag in $\res{\Gamma}{x_l}$. Recursively define: $\Sigma ^L (x_l) = \Sigma ^L (x_l, f) := \res{\Sigma ^L(x, f)} {x_l}$. The proof of well-definedness is similar to the one given in the third step. The objects in $\Sigma ^L(x_l)$ and their incidence are defined using the method from the first step.

\medskip
The sheaf values over the rank $1$ flags of $\Gamma$ can be written as
follows:
\begin{itemize}
\item[ (i).] \quad $\Sigma (x) := \Sigma ^L (x) \oplus \Sigma ^R (x),\;   \text{for
any object}\ x\  \text{of type in}\  K \setminus \lbrace 4 \rbrace$;
\item[(ii).] \quad $\Sigma (b) \; \text{for any object}\ b \in \; ^{4}\Gamma$.
\end{itemize}

Next, we construct the sheaf the sheaf over the rank $2$ flags of $\Gamma$ and we check the compatibility condition: $\res{\Sigma(x)}{y} = \Sigma(x,y) = \res{\Sigma(y)}{x}$, where $\lbrace x, y \rbrace$ is a nonempty rank $2$ flag in $\Gamma$.

\medskip
${\mathfrak 6})$. For $\lbrace x_i, x_j \rbrace$, a rank $2$ flag of type $\lbrace i, j\rbrace$ in $\Gamma$ with $i, j \in K \setminus \lbrace 4 \rbrace$
and $i<j$, set
$$\Sigma (x_i, x_j) := \Sigma ^{L} (x_i) \oplus \;
^{(ij)} \Gamma \oplus \Sigma ^{R}(x_j)$$
where $^{(ij)}\Gamma$ denotes the
truncation of $\Gamma$ to those objects of type $l \in K$ with $i<l<j$. Since
\begin{equation*}
\begin{split}
&\res{\Sigma(x_i)}{x_j} = \res {\Sigma ^L (x_i)}{x_j} \oplus \res {\Sigma ^R(x_i)}{x_j} = \Sigma ^L (x_i) \oplus \; ^{(ij)}\Gamma \oplus \Sigma ^R (x_j)\\
&= \res {\Sigma ^L (x_j)}{x_i} \oplus \Sigma ^R (x_j) = \res{\Sigma (x_j)}{x_i}
\end{split}
\end{equation*}
it follows that the compatibility condition is satisfied in this case.

\medskip
${\mathfrak 7})$. There are two remaining cases, as follows:
\begin{itemize}
\item[a).] For $\lbrace a,b \rbrace$, a $\{ 3,4\}$-flag, define $\Sigma (a,b) := \Sigma ^L(a) \oplus \Sigma ^R (b)$.
\item[b).] For a flag $ \lbrace b, x_j \rbrace$ of type $\{4, j \}$, with $j \in \{5,6,7\}$ set $\Sigma(b, x_j) := \res{\Sigma ^L(x_j)}{b} \oplus \Sigma ^R(x_j)$.
\end{itemize}
The compatibility condition can be checked now, the proof is similar to the one given in the previous step.

\medskip
Therefore $\Sigma$ is defined for all rank $1$ and rank $2$ flags of
$\Gamma$. Now using the aforementioned Lemma, see \cite[Section 6]{on1}, of Ellard and Shult, we can extend the sheaf $\Sigma$ to a sheaf (denoted the same) over all nonempty flags of $\Gamma$.

\medskip
The sheaf $\Sigma$ is residually connected; this follows from the fact that all sheaf values at rank $1$ and $2$ flags of $\Gamma$ are $J$-truncated buildings or products of $J$-truncated buildings.

\medskip
According to a result of Brouwer and Cohen \cite{loc}, there exists a canonically defined chamber system $\mathcal{C}(\Sigma)$ over $I$, which is residually connected and belongs to diagram $\mathcal{Y}$. It satisfies the property that $_J {\mathcal C}(\Sigma) \; \simeq \; {\mathcal C}(\Gamma)$, an isomorphism of chamber systems over $K$, with the term on the right being the chamber system corresponding to the geometry $\Gamma$. Moreover, all rank $3$ residues are covered by truncations of buildings. Therefore by Tits' Local Approach Theorem \cite{tits}, see also Theorem $2.1$,  the universal $2$-cover of ${\mathcal C}(\Sigma)$ is the chamber system ${\mathcal B}$ of a truncated building belonging to diagram $\mathcal{Y}$.

\medskip
There is a $J$-locally truncated building geometry $\widetilde \Delta $ which belongs to the diagram $\mathcal{Y}$ and which is the image under the functor ${\mathbf G}: {\mathcal
Chamb}_I \rightarrow {\mathcal Geom}_I$ of $\mathcal{B}$; see Section 2.3. The chamber system ${\mathcal C}(\Sigma)$ also has a corresponding geometry which we shall denote $\Delta$. Since the chamber system ${\mathcal C}(\Sigma)$ is
residually connected, it follows that $\Delta$ is also residually connected. The $2$-cover map of chamber systems ${\mathcal B} \rightarrow \mathcal{C}(\Sigma)$ functorially induces a covering of geometries $\widetilde{\Delta} \rightarrow \Delta$. The fact that $\widetilde{\Delta}$ is the universal cover of $\Delta$ follows from the fact that $\mathcal{B}$ is the universal cover of $\mathcal{C}(\Sigma)$. Moreover the geometry $\Delta$ has the property that $ _J \Delta \simeq \Gamma$; for details see \cite{on1}. Combining the previous results with Theorem $3.1$, it follows that $\widetilde{\Gamma}$ is the homomorphic image of a building.
\end{proof}

\end{document}